\newcommand{\version}{8f628b787780}
\newcommand{\scriptL}{\mathcal{L}} 
\newcommand{\trefoil}{\gamma_{3_1}}
\newtheorem{definition}{Definition}
\newtheorem{conjecture}{Conjecture}
\newtheorem{remark}{Remark}
\newtheorem{lemma}{Lemma}
\newtheorem{proposition}{Proposition}
\newcommand{\NN}{\mathbb{N}}
\newcommand{\ZZ}{\mathbb{Z}}
\newcommand{\RR}{\mathbb{R}}
\newcommand{\Sph}{\mathbb{S}}
\newcommand{\diam}{{\rm diam}}
\newcommand{\pp}{{\rm pp}}
\newlength\smallpic
\begin{document}

\title{A closed contact cycle on the ideal trefoil}
\date{\today}

\author{M.~Carlen, H.~Gerlach}
\maketitle
\begin{center}
{Institut de Math\'{e}matiques B, \'{E}cole Polytechnique 
F\'ed\'erale de Lausanne, CH-1015 Lausanne, Switzerland, 
\{mathias.carlen, henryk.gerlach\}@gmail.com}
\end{center}

\begin{abstract}
Numerical computations suggest that each point on a certain optimized shape called the ideal trefoil is in contact with two other points.
We consider sequences of such contact points, such that each point is in contact with its predecessor and call it a billiard.
Our numerics suggest that a particular billiard on the ideal trefoil closes to a periodic cycle after nine steps.
This cycle also seems to be an attractor: all billiards converge to it.
\end{abstract}

\section{Introduction}
A closed curve in $\RR^3$ is called ideal if it minimizes its ropelength $\scriptL[\cdot]/\Delta[\cdot]$
-- i.e. its length divided by its thickness -- within its knot class \cite{GM99}. 
In this paper we will focus on the simplest of all proper ideal knots, 
namely the trefoil knot. Various numerical approximations of this specific
knot are available. It is not trivial to define what the properties of a  ``good'' approximation are.
Quantities like ropelength, functions such as curvature and torsion, or the contact set for a given knot are
can all be used to assess whether a knot is close to ideal. There exist
several algorithms to compute ideal knot shapes, which use
different approximations for the curve description\cite{L98,bookchapter,P98,CPR05,ACPR10}. These
numerical computations are expected to lead to a better
understanding of ideal knots. In this sense a numerical shape is ``good'',
if it leads to more insight about properties of ideal knots. 

A curve is in contact with itself at the points $p,q$ if the distance between $p$ and $q$
is precisely two times the thickness of the curve and the line segment between them is orthogonal
to the curve at both ends. \cite{S04, bookchapter} define a robust sense of contact with a tolerance and their computations 
suggest that each point on the ideal trefoil in $\RR^3$ is in contact with two other points . 
Starting
from a point $p_0$, it is in contact with a point $p_1$ that itself is again in contact with a point $p_2\neq p_0$ and
so on. Does this sequence close to a cycle?
In this article we observe that computations suggest that the ideal trefoil
knot has a periodic, and attracting nine-cycle of contact chords,
as illustrated in Figure \ref{fig:billiard}.

A similar construction of periodic cycles, but in each point of the curve, 
helped to construct the ideal Borromean rings \cite{Sta03,CFKSW06}.
The existence of this cycle is significant because it partitions the trefoil in such a way
that, using the apparent symmetries, it can be re-constructed from two unknown small pieces of 
curves mutually in contact.

\begin{figure}
\begin{center}
\includegraphics[width=.6\textwidth]{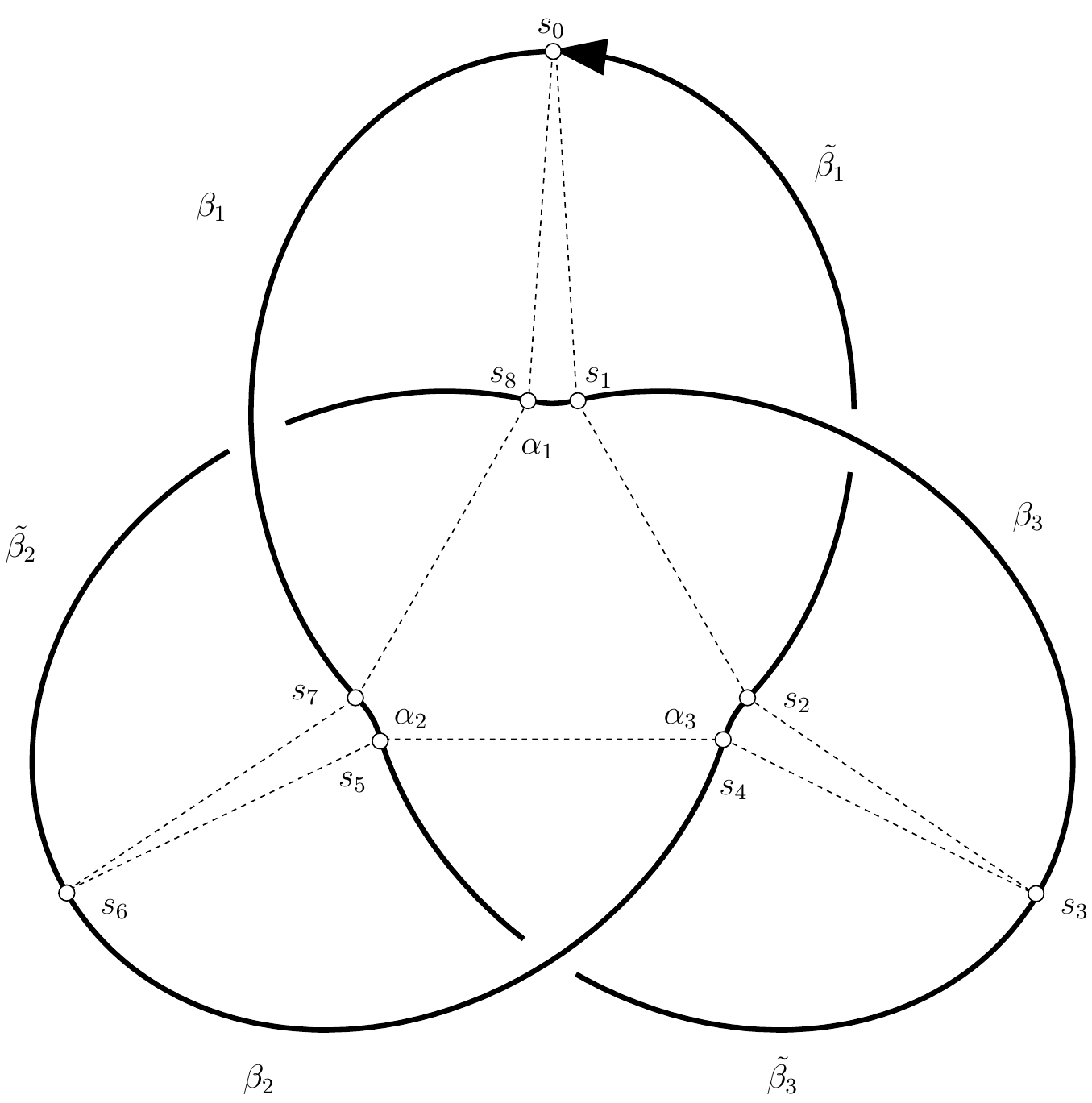} 
\caption{ The parameters $s_i := \sigma^i(0)$ of the nine-cycle $b_9$ partition the trefoil in 9 curves: 
          $\beta_i$ and $\tilde{\beta_i}$ are all congruent as are $\alpha_i$ ($i=1,2,3$). The contact function 
          $\sigma$ maps the parameter 
          interval of each curve bijectively to the parameter interval of another curve (see Figure 
          \ref{fig:connect}).
        } \label{fig:billiard}
\end{center}
\end{figure}

We approximated the ideal trefoil using a Fourier representation described in \cite{CG10}.
The numerical computations were carried out with \texttt{libbiarc} \cite{LB10}
and the data is available from \cite{Data10}.
The numerical Fourier trefoil is not the best known in ropelength sense, but -- to our knowledge -- 
the best shape to observe the closed cycle, probably because we can enforce specific
symmetries. 

Another interesting discovery is that, if we follow the contact chords starting
at an arbitrary point on the trefoil, we always end up at
the previously mentioned cycle, in other words, it is a global attractor.

In order to present this closed cycle on the trefoil we first
review the notions of global radius of curvature \cite{GM99}
and contact of a curve \cite{PP02, bookchapter} in Section \ref{sec:grc}. 
Section \ref{sec:billiards} introduces contact billiards and cycles. Then
we present and discuss a candidate for a cycle in the trefoil
and show numerically that it seems to act as an attractor for all the billiards
on the trefoil.

\section{The Ideal Trefoil -- Its Contact Chords and Symmetries}\label{sec:grc}
A knot is a closed curve $\gamma \in C^1(\Sph,\RR^3)$ 
where $\Sph:=\RR/\ZZ$ is the unit interval with the endpoints identified, isomorphic to the unit circle.
We use the global radius of curvature to assign a thickness $\Delta$
to $\gamma$.
\begin{definition}[Global radius of curvature] \cite{GM99} \label{def:grc}
    For a $C^0$-curve  $\gamma: \Sph \longrightarrow \RR^3$ the global radius of
    curvature at $s \in \Sph$ is
    \begin{equation}
        \rho_G[\gamma](s) := \inf_{\sigma,\tau \in \Sph, \sigma \not= \tau, \sigma\not=s, \tau\not=s}
                R(\gamma(s),\gamma(\sigma), \gamma(\tau)).
    \end{equation}
    Here $R(x,y,z) \ge 0$ is the radius of the smallest circle
    through the points $x,y,z \in \RR^3$,  i.e.
    \[
        R(x,y,z) :=
        \left\{
        \begin{array}{cl}
                \frac{|x-z|}{2 \sin \measuredangle(x-y,y-z)} &
                \mbox{$x,y,z$ not collinear}, \\
        \infty &
                \mbox{$x,y,z$ collinear, pairwise distinct}, \\
        \frac{\diam(\{x,y,z\})}{2} & \mbox{otherwise}.
        \end {array}
        \right.
     \]
    where $\measuredangle(x-y,y-z) \in [0,\pi/2]$ is the smaller angle between
    the vectors $(x-y)$ and $(y-z) \in \RR^3$, and
    \[
        \diam(M):= \sup_{x,y \in M} |x-y| \; \mbox{for} \; M \subset \RR^3
    \]
    is the diameter of the set $M$.
    The {\it thickness of $\gamma$}, denoted as
    \begin{equation} \label{1.1} \Delta[\gamma] := \inf_{s \in \Sph} \rho_G[\gamma](s)
                     = \inf_{s,\sigma,\tau \in \Sph, \sigma \not= \tau,\sigma\not=s,\tau\not=s}
                       R(\gamma(s),\gamma(\sigma), \gamma(\tau)),
    \end{equation}
    is defined as the infimum of $\rho_G$.
\end{definition}
A curve that minimizes arclength over thickness is called an ideal
knot \cite{KBMSDS96,GM99}.
Already \cite{GM99} showed in a $C^2$-setting that for a knot to be ideal,
$\rho_G$ around a parameter\footnote{The proof from \cite{GM99} only requires the curve to be $C^2$ on a neighborhood of the parameter, not everywhere.} 
is either constant and equal to
the infimum, or the curve is locally a straight line. 
A proof of this necessary condition for $C^{1,1}$ curves 
is not known yet. 
Assume for a moment\footnote{So far, the numerical shapes suggest that most ideal knots are $C^2$ except for a finite number of points.}, 
that $\gamma$ is ideal and $C^2$, then for each $t \in \Sph$ we distinguish
the following three cases \cite{GM99,LSDR99}:

\begin{enumerate}
  \item[(A)] $\rho_G(t) > \Delta[\gamma]$ and there 
             exists $\varepsilon>0$ such that $\gamma(\{s: |s-t|<\varepsilon\})$ is a straight line.
  \item[(B)] $\rho_G(t) = \Delta[\gamma]$ and the curvature of $\gamma$ at $t$ is $1/\Delta[\gamma]$.
  \item[(C)] $\rho_G(t) = \Delta[\gamma]$ and 
     there exists a $s \in \Sph$ with $|\gamma(s)-\gamma(t)|=2\Delta[\gamma]$ and 
     $\langle\gamma'(s), \gamma(s)-\gamma(t)\rangle = \langle\gamma'(t), \gamma(s)-\gamma(t)\rangle=0$.
\end{enumerate}
In case (B) we say that global curvature is attained locally, or that curvature is active, while in case (C) we say
that the contact is global.\footnote{For $C^{1,1}$-curves the situation is less clear but the cases (B) and (C) remain interesting.} 
The global contact is realized by a contact chord.

\begin{definition}[Contact Chord]\label{def:contact_chord}
  Let $\gamma \in C^1(\Sph,\RR^3)$ be a regular, i.e. $|\gamma'(s)|>0$, curve with $\Delta[\gamma]>0$ and
  let $s,t \in \Sph$ be such that $c(s,t) := \gamma(t)-\gamma(s)$ has length
  \[ 
    |c(s,t)|=2\Delta[\gamma],
  \]
  and $c(s,t)$ is orthogonal to $\gamma$, i.e.  
     $\langle\gamma'(s), c(s,t))\rangle = \langle\gamma'(t), c(s,t)\rangle=0,$
  then we call $c(s,t)$ a {\it contact chord}.
  If such $s$ and $t$ exist, we say $\gamma$ has a
  contact chord connecting $\gamma(s)$ and $\gamma(t)$ or the parameters
  $s$ and $t$ are (globally) in contact.
  The set
  \[
     \{\gamma(s)+h c(s,t): h \in [0,1]\} \subset \RR^3
  \]
  will also be called a contact chord. Being in contact is a symmetric
  relation.
\end{definition}

\begin{figure}
  \begin{center}
  \begin{tabular}{|ll|}
      \hline
      Name  & \verb:k3_1:\\
      Degrees of freedom & 165 \\
      Biarc nodes & 333 \\
      Arclength $\scriptL$ & 1 \\
      Thickness $\Delta$ & 0.030539753 \\
      Ropelength $\scriptL/\Delta$ & 32.744208 \\
      \hline 
    \end{tabular} \\
    \vspace{.4cm}
    \begin{tabular}{ccc}
    \includegraphics[width=.3\textwidth]{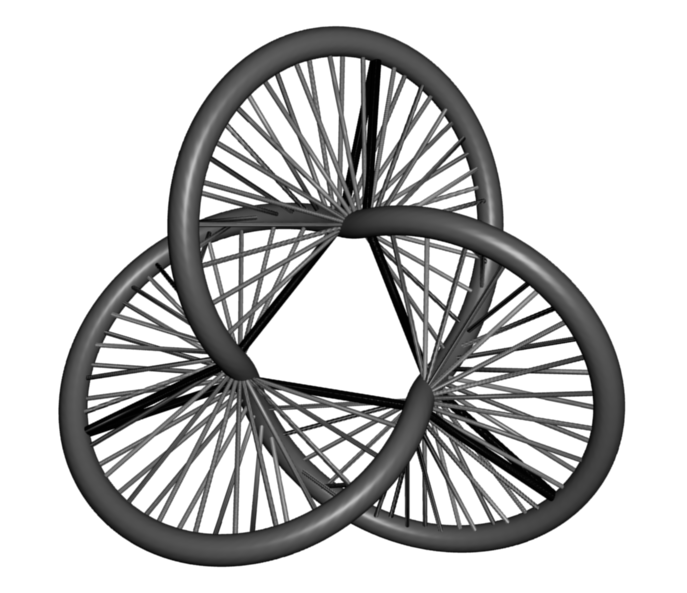} &
    \includegraphics[width=.3\textwidth]{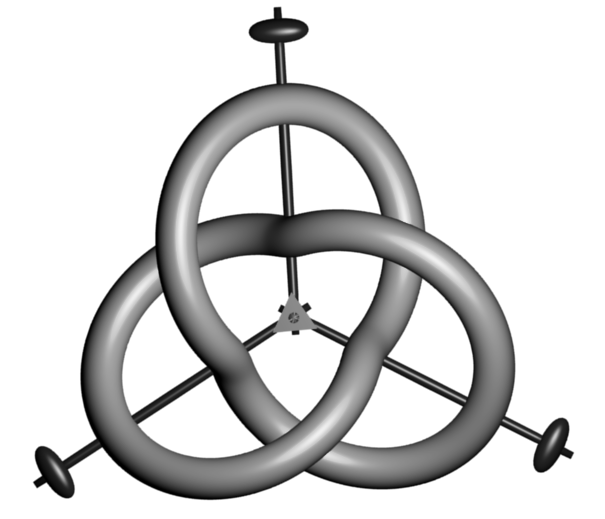} &
    \includegraphics[width=.3\textwidth]{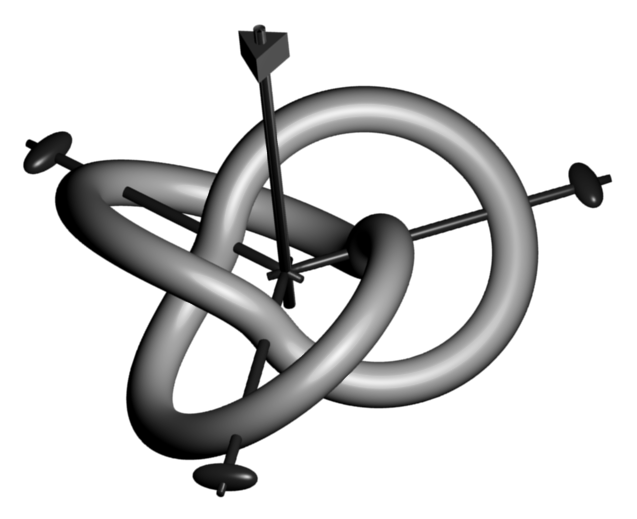} \\
    (a) & (b) & (c)
    \end{tabular}
  \end{center}
  \caption{In the top box we describe the data for the trefoil used
           for the computations.
           In the bottom row we have (a) the trefoil with a
           few contact chords. The dark chords visualize the closed cycle
           in the contact chords of the trefoil. The pictures (b) and
           (c) illustrate two different views of plausible symmetry axes.
           The $120^\circ$ rotation axis and the three $180^\circ$ rotation axes are decorated with a prism and an ellipsoid at
           the end, respectively.
           \label{fig:billiard_struts} }
\end{figure}

For the rest of the article, we will restrict ourselves to the ideal trefoil\footnote{It is widely assumed that the ideal
trefoil is unique but it remains to be proven rigorously.} $\trefoil$.
The trefoil data used in this article was computed as in \cite{CG10}.
A Fourier representation of the knot makes enforcing symmetries
natural. The specific symmetries are proposed in Conjecture \ref{con:sym}.
They significantly reduce the number of independent Fourier parameters in
simulated annealing \cite{L98}, while the computation
of the thickness is done by interpolating the Fourier knot
with biarcs \cite{bookchapter,CG10}. The Fourier coefficients
and the point-tangent data for the trefoil are available online
\footnote{Data is available at \cite{Data10}.
\texttt{k3\_1.3} with MD5 sum \texttt{cf5e2f8550c4c1e91a2fd7f5e9830343}
and \texttt{k3\_1.pkf} with sum \texttt{531492b73b2ec4be2829f6ab2239d4d5}.
} (see also Figure \ref{fig:billiard_struts}).

The numeric approximations of the ideal trefoil suggest that every point is globally in contact
with two other points on the trefoil \cite{bookchapter}. 
We can sort the contact chords in a continuous fashion, such that each point has an incoming and an outgoing
contact. 
In our numerical computations of the contact chords we used the point-to-point
distance function
\[
\pp(s,\sigma):=|\gamma(s)-\gamma(\sigma)|.
\]
The general belief is that the $\pp$-function of the ideal trefoil has an extremely flat double valley away from the diagonal 
\cite{bookchapter} (see Figure \ref{fig:pp3d} for a 3D version of $\pp(s,\sigma)$ for the trefoil).
For a sampling $s_i:=i/n$, $i=0,\ldots,n$, we compute $\sigma_i$ as the minimum of $\pp(s_i,\cdot)$
restricted to the region $[\sigma_{i-1}-\varepsilon,\sigma_{i-1}+\varepsilon]$,
$1\gg\varepsilon>0$. 
The initial
value $(s_0,\sigma_0)$ is computed as the local minimum away from the diagonal.
We now choose one of the two valley floors. By staying close to the previously computed minimum,
we never cross over to the second valley. We then 
linearly interpolate between the $(s_i,\sigma_i)$ pairs to obtain an approximation of the so called contact
function $\sigma$ (also see Figure \ref{fig:connect} below for a top view of $\sigma$):

\begin{figure} 
  \begin{center} 
      \includegraphics[width=.8\textwidth]{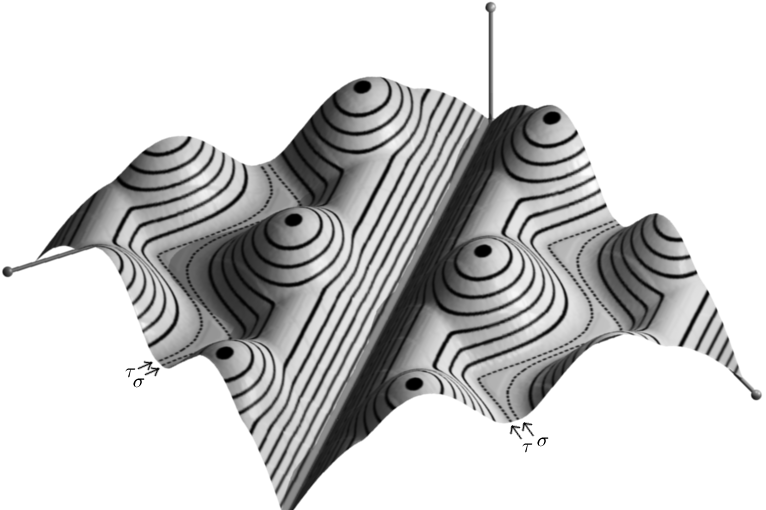} 
  \end{center} 
 \caption{The distance function $\pp(s,\sigma)$ for $s,\sigma\in \Sph$ goes to $0$ on the diagonal and forms a large valley with two very shallow sub-valleys. 
 The dotted lines marked with arrows in the valley indicate the two local minima, i.e. $\sigma(s)$ and 
 $\tau(s)$.} 
 \label{fig:pp3d} 
\end{figure}

\begin{definition}[Contact functions]\label{def:sigma}
  Let $\sigma:\Sph\longrightarrow\Sph$ be a continuous, bijective and orientation preserving function, such that
  $\trefoil(s)-\trefoil(\sigma(s))$ is a contact chord for every $s \in \Sph$.
  The inverse function of $\sigma$ is $\tau:=\sigma^{-1}$.
\end{definition}

As mentioned previously, numerics point out that the trefoil is
symmetric with respect to a specific symmetry group \cite{bookchapter,BPP08,CG10}.
These symmetries have helped to identify the closed cycle proposed
later in this article.

\begin{conjecture}[Symmetry of the ideal trefoil]\label{con:sym}
The ideal trefoil has symmetries as shown in Figure \ref{fig:billiard_struts}.
\end{conjecture}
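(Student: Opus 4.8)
The natural route is a symmetrization argument built on the invariance of ropelength under the group of rigid motions of $\RR^3$ and under reparametrization. The functional $\scriptL[\gamma]/\Delta[\gamma]$ is unchanged if $\gamma$ is replaced by $g\circ\gamma$ for any isometry $g$ of $\RR^3$, or if $\gamma$ is reparametrized, including by an orientation-reversing reparametrization. The first step is to record the geometric symmetries of the trefoil \emph{knot class}: the standard $(2,3)$-torus-knot presentation carries a $120^\circ$ rotation $R\in SO(3)$ about a central axis, and the trefoil is invertible, a property realized geometrically by a $180^\circ$ rotation $S\in SO(3)$ about a perpendicular axis that reverses the orientation of the curve while preserving the knot class. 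The rotations $R$ and $S$ generate the dihedral group $D_3$ of order six, whose $3$-fold axis and three $2$-fold axes are exactly those drawn in Figure~\ref{fig:billiard_struts}.

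The plan is to pass from symmetries of the knot class to symmetries of the minimizer via uniqueness. One first invokes existence of a ropelength minimizer in the trefoil class and fixes such a minimizer $\trefoil$. Working in the configuration space of embeddings of $\Sph$ in the trefoil class modulo reparametrization, on which the rigid motions $E(3)$ act and leave ropelength invariant, uniqueness of the ideal trefoil means this space has a single minimum modulo $E(3)$. For each generator $g\in\{R,S\}$ the curve obtained by applying $g$ (composed, in the case of $S$, with orientation reversal of the parameter) lies in the same oriented knot class and has the same ropelength, hence is again a minimizer; uniqueness then forces it to coincide with $\trefoil$ up to an element of $E(3)$. After normalizing $\trefoil$ — translating its center of mass to the origin and aligning principal axes — these coincidences become honest rotational symmetries of the embedded curve, and one checks that they can be realized simultaneously so that the isometry group of $\trefoil$ contains a copy of $D_3$ positioned as claimed.

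The decisive obstacle is the uniqueness of the ideal trefoil, which enters the argument in an essential way and which is itself unresolved, as the footnote accompanying $\trefoil$ already notes; without it one only learns that the $D_3$-orbit of $\trefoil$ consists of minimizers, not that a single minimizer is fixed. Two further difficulties persist even granting uniqueness. First, the $180^\circ$ rotation enters through invertibility and is comparatively transparent, but capturing the full $3$-fold symmetry is more delicate: it is not an operation on parametrizations and must be extracted from the realizable symmetry group of the knot type together with the rigidity supplied by uniqueness. Second, one must preclude degeneracies in which the realized isometries collapse — for instance the three $2$-fold axes coinciding, or a generator acting trivially — so that one genuinely recovers all of $D_3$ rather than a proper subgroup. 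Because the uniqueness input is missing, the statement is offered as a conjecture, with the numerical evidence of Figure~\ref{fig:billiard_struts} standing in for the absent rigorous argument.
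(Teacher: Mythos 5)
The first thing to say is that the paper does not prove this statement at all: it is labelled a conjecture precisely because the authors have only numerical evidence (the axes displayed in Figure \ref{fig:billiard_struts}, obtained from a Fourier representation on which the symmetries were in fact \emph{enforced} during the computation). You correctly sense this, and your final paragraph honestly concedes that the argument cannot be closed. So there is no disagreement about the status of the claim. What deserves comment is the proposed strategy itself, because it contains a gap that is more fundamental than the missing uniqueness of the ideal trefoil.

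The symmetrization-via-uniqueness argument works only when you have an operation $\Phi$ on the space of curves that (a) preserves the knot class, (b) preserves ropelength, and (c) is itself a nontrivial rigid motion or differs from one by something geometrically trivial; then uniqueness gives $\Phi\gamma = g\circ\gamma$ and $g^{-1}\circ\Phi$ is a genuine isometry fixing the curve. For an amphichiral knot one can take $\Phi$ to be a mirror reflection and this machine produces something. For the trefoil, none of the symmetries you invoke fit the template. The $120^\circ$ period of the $(2,3)$-torus knot is a statement about a finite-order homeomorphism of $S^3$ preserving the knot, i.e.\ it lives in the symmetry group of the pair $(S^3,K)$; it is not an operation on space curves that preserves ropelength, so there is nothing to feed into the uniqueness argument. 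Applying an actual rotation $R\in SO(3)$ to $\gamma$ trivially yields a congruent curve, and the argument returns $g=R$, hence the identity symmetry. Invertibility is realized by reversing the parameter, which does not change the curve as a subset of $\RR^3$, so it too yields nothing. Consequently, even granting uniqueness of the minimizer up to congruence, your route produces no nontrivial element of $D_3$; an asymmetric unique minimizer is entirely consistent with everything you have used. A proof would need a genuinely different mechanism --- for instance a symmetrization construction that strictly decreases ropelength for non-symmetric competitors in the trefoil class --- and no such construction is known. This is worth stating plainly, because as written your sketch suggests that uniqueness is the \emph{only} obstruction, which understates the difficulty and slightly misrepresents why the authors leave the statement as a conjecture.
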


The symmetries of the trefoil are also apparent in its contact functions $\sigma$
and $\tau$. The relations are listed in the following lemma.

\begin{lemma}[Symmetry of $\sigma,\tau$]\label{lem:sigma_sym}
  Assume Conjecture \ref{con:sym} about the symmetry of the constant speed parameterized trefoil 
  $\trefoil: \Sph \longrightarrow \RR^3$ is true. Then the contact 
  functions $\sigma,\tau: \Sph\longrightarrow \Sph$ have the following properties:
  \begin{eqnarray}
    \sigma(s^*+t) &\stackrel{\text{in\;} \Sph}{=}& -\tau(s^*-t), \quad  \forall t\in \Sph \\
    \sigma(t+1/3) &\stackrel{\text{in\;} \Sph}{=}& \sigma(t)+1/3,\quad  \forall t\in \Sph \\
    \tau(s^*+t) &\stackrel{\text{in\;} \Sph}{=}& -\sigma(s^*-t), \quad  \forall t\in \Sph \\
    \tau(t+1/3) &\stackrel{\text{in\;} \Sph}{=}& \tau(t)+1/3,    \quad  \forall t\in \Sph
   \end{eqnarray}
  where $s^*\in\Sph$ is a parameter such that $\trefoil(s^*)$ is on a $180^\circ$ rotation axis.  \qed
\end{lemma}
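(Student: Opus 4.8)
The plan is to translate each geometric symmetry of $\trefoil$ into an identity for the induced reparameterisation, and then exploit that rigid motions carry contact chords to contact chords. Since $\trefoil$ is parameterised by constant speed and every symmetry is an isometry $A$ of $\RR^3$ fixing the image $\trefoil(\Sph)$, the relation $A\circ\trefoil=\trefoil\circ\phi$ defines a reparameterisation $\phi\colon\Sph\to\Sph$. Differentiating and taking norms gives $|\phi'|\equiv 1$, so $\phi$ is an isometry of $\Sph$, i.e. $\phi(s)=\pm s+c$. For the $120^\circ$ rotation $\Phi$, which has order three and preserves the orientation of the curve, this forces $\phi(s)=s+1/3$; for the $180^\circ$ rotation $\Psi$ about the axis through $\trefoil(s^*)$, which has order two and (because its axis meets the curve) reverses the orientation of the curve with $\trefoil(s^*)$ fixed, this forces $\phi(s)=2s^*-s$. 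After translating the arclength origin onto a curve point of the chosen axis we may take $2s^*\equiv 0$ in $\Sph$, so that $\phi(s)\equiv -s$.

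Next I would record that an isometry preserves lengths, tangent inner products and the thickness $\Delta[\trefoil]$, hence maps a contact chord to a contact chord. Reading this through $A\circ\trefoil=\trefoil\circ\phi$, the map $\phi\times\phi$ permutes the contact set $C:=\{(s,t)\in\Sph\times\Sph:\trefoil(s),\trefoil(t)\text{ in contact}\}$. Under the standing numerical hypothesis $C$ consists of two disjoint branches, namely the graphs of $\sigma$ and of $\tau=\sigma^{-1}$ (the latter being the transpose of the former), so each symmetry either preserves or interchanges these two branches.

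For $\Phi$ the induced map is $(s,t)\mapsto(s+1/3,t+1/3)$. Interchanging the two distinct branches is an involution of a two-element set, so its cube again interchanges them; but $\Phi^3=\mathrm{id}$ acts trivially, forcing $\Phi$ to preserve each branch. Preserving the graph of $\sigma$ means $(s+1/3,\sigma(s)+1/3)$ lies on it, giving $\sigma(s+1/3)=\sigma(s)+1/3$, i.e. equation (2); preserving the graph of $\tau$ (or inverting (2)) gives equation (4).

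The $180^\circ$ rotation is the crux. Its induced map is $(s,t)\mapsto(-s,-t)$, and the order-two relation $\Psi^2=\mathrm{id}$ no longer decides between preserving and interchanging the branches. I would rule out preservation by evaluating at the fixed parameter $s^*$: if $\phi\times\phi$ fixed the graph of $\sigma$ then $\sigma(-s)=-\sigma(s)$, and at $s=s^*$ this gives $\sigma(s^*)=-\sigma(-s^*)=-\sigma(s^*)$ (using $-s^*\equiv s^*$), forcing $2\sigma(s^*)\equiv 0$ and hence $\sigma(s^*)\in\{s^*,\,s^*+1/2\}$; the first case says $\trefoil(s^*)$ is in contact with itself and the second that it is in contact with the antipodal axis point $\trefoil(s^*+1/2)$, both excluded by the non-degeneracy of the contact configuration at the symmetry point. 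Consequently $\phi\times\phi$ interchanges the branches, so the image $\{(-s,-\sigma(s))\}$ of the graph of $\sigma$ is the graph of $\tau$; writing $t=-s$ this reads $\tau(t)=-\sigma(-t)$, i.e. equation (3), and inverting gives $\sigma(t)=-\tau(-t)$, i.e. equation (1). The main obstacle is precisely this branch bookkeeping for $\Psi$ -- establishing that the $180^\circ$ rotation swaps the incoming and outgoing contacts rather than fixing them -- which rests on the connectedness of the two branches together with the non-degeneracy of the contact pair at $\trefoil(s^*)$.
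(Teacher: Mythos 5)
The paper states this lemma with a terminal \qed and supplies no proof at all, so there is no argument of the authors' to compare against; your proposal is a reconstruction of what they evidently regard as immediate from the symmetries, and its overall architecture is sound. The reduction of each spatial symmetry to a circle isometry via constant speed, the observation that $\phi\times\phi$ permutes the contact set, the homomorphism argument $\ZZ/3\to\ZZ/2$ forcing the $120^\circ$ rotation to preserve each branch (yielding (2) and (4)), and the fixed-point test at $s^*$ for the $180^\circ$ rotation are all correct and correctly yield (1) and (3) in the normalization $2s^*\equiv 0$, which is the one the paper uses (the stated formulas are in fact only literally valid for such $s^*$).

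Two points deserve flagging. First, your branch dichotomy silently needs the graphs of $\sigma$ and $\tau$ to be the \emph{disjoint} connected components of the contact set; if they intersected anywhere, $\phi\times\phi$ could in principle preserve the union without preserving or swapping the graphs globally. This is consistent with the paper's standing picture of two separate valley floors, but it is an assumption beyond Definition \ref{def:sigma} and Conjecture \ref{con:sym} and should be stated as such. Second, and more substantively, your exclusion of the case $\sigma(s^*)=s^*+1/2$ is not a consequence of the symmetry conjecture: a priori the two points where the curve meets a $180^\circ$ axis could be at distance exactly $2\Delta$ with the (axial) chord orthogonal to the curve at both ends, since the tangent at an axis point is necessarily perpendicular to the axis. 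Ruling this out requires a genuine geometric input about the trefoil (numerically, $\sigma(0)\approx 0.492\neq 1/2$, and the two axis points are far more than $2\Delta$ apart), so your appeal to ``non-degeneracy'' is an additional hypothesis rather than a proof step. Within the paper's conjecture-laden framework this is an acceptable price, but it is the one place where your argument genuinely uses more than Conjecture \ref{con:sym}.
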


\section{Closed Cycles} \label{sec:billiards}

Recall from the previous section that numerics suggest that every point on the ideal trefoil $\trefoil$
is in contact with two other points and we assume to be able to define a contact 
function $\sigma$ as in Definition \ref{def:sigma}.
Is there a finite tuple of points such that each parameter is in contact with, and only with, its cyclic 
predecessor and successor?
Inspired by {\it Dynamical Systems} \cite{B27} we call a sequence of parameters
that are in contact with each predecessor a billiard.
If a billiard closes, we call it a cycle:
\begin{definition}[Cycle]
  For $n \in \NN$ let $b:=(t_0,\ldots,t_{n-1}) \in \Sph\times\cdots\times\Sph$ be an $n$-tuple.
  We call $b$ an $n$-cycle if $\sigma(t_i)=t_{i+1}$ for $i=0,\ldots,n-2$ and $\sigma(t_{n-1})=t_0$, where
  $\sigma$ is defined as in Definition \ref{def:sigma}.
  The cycle $b$ is called minimal if all $t_i$ are pairwise distinct.
\end{definition}

Each cyclic permutation of a cycle is again a cycle. Basing the definition of cycles on the
continuous function $\sigma$ instead of closed polygons in $\RR^3$ makes it slightly easier to find them numerically:
\begin{remark}
  The $\trefoil$ has an $n$-cycle iff there exists some $t \in \Sph$  such that
  \[
    \sigma^n(t):=\underbrace{\sigma\circ\cdots\circ\sigma}_{n \text{ times }}(t)=t.
  \]
  The cycle is then $b:=(t,\sigma^1(t),\ldots,\sigma^{n-1}(t))$. \hfill$\Box$
\end{remark}

All parameters of a minimal $n$-cycle are pairwise distinct so each minimal $n$-cycle corresponds
to $n$ points in the set $\{t \in \Sph: \sigma^n(t) = t\}$. Since there are $n$ cyclic permutations of an $n$-cycle
and since minimal $n$-cycles that are not cyclic permutations must be point-wise distinct this leads to:
\begin{lemma}[Counting Cycles]\label{lem:counting}
  Define the set of intersections of $\sigma^n$ with the diagonal $I := \{t \in \Sph: \sigma^n(t) = t\}$.
  If there is a finite number of minimal $n$-cycles then 
  \begin{eqnarray*}
   \#I &\ge& (\text{Number of distinct minimal $n$-cycles})\cdot n  \\
       & = & (\text{Number of minimal $n$-cycles}).
  \end{eqnarray*}
  \qed
\end{lemma}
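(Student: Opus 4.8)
The plan is to count the solutions of $\sigma^n(t)=t$ by grouping them into orbits of the cyclic action induced by $\sigma$. First I would observe that $\sigma$ restricts to a map on the finite set $I=\{t\in\Sph:\sigma^n(t)=t\}$: indeed, if $\sigma^n(t)=t$ then applying $\sigma$ gives $\sigma^n(\sigma(t))=\sigma(\sigma^n(t))=\sigma(t)$, so $\sigma(t)\in I$ as well. Thus $\sigma$ generates a cyclic group action on $I$, and the orbit of any $t\in I$ is exactly $\{t,\sigma(t),\dots,\sigma^{n-1}(t)\}$, which is the cycle $b$ from the preceding remark.

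Next I would note the correspondence between minimal $n$-cycles and these orbits. A minimal $n$-cycle has $n$ pairwise distinct parameters, so its orbit under $\sigma$ has exactly $n$ elements (the orbit size equals the period of $t$, which is $n$ precisely when the entries are distinct). Conversely each minimal $n$-cycle, together with its $n$ cyclic permutations, yields the same orbit in $I$; and two minimal $n$-cycles that are not cyclic permutations of one another are point-wise disjoint, hence give disjoint orbits. Therefore the orbits of size $n$ in $I$ are in bijection with the distinct minimal $n$-cycles (up to cyclic permutation), each such orbit contributing exactly $n$ points to $I$.

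The lower bound then follows by summing: the orbits of size exactly $n$ already account for $(\text{Number of distinct minimal } n\text{-cycles})\cdot n$ points of $I$, and since $I$ may contain additional points (fixed points of smaller period $d\mid n$, coming from shorter cycles, and points that are not part of any minimal $n$-cycle), we only get the inequality $\#I\ge(\text{Number of distinct minimal }n\text{-cycles})\cdot n$. The stated equality in the second line is merely the convention that ``Number of minimal $n$-cycles'' counts cycles \emph{with} their cyclic permutations, so that it equals $(\text{Number of distinct minimal }n\text{-cycles})\cdot n$ by definition; I would make this bookkeeping explicit rather than treat it as a derived fact.

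The only subtle point — and the step I would be most careful about — is the claim that each minimal $n$-cycle really has orbit size exactly $n$ and that distinct cycles give genuinely disjoint orbits. This rests on the injectivity of $\sigma$ (guaranteed by Definition \ref{def:sigma}, where $\sigma$ is bijective): injectivity ensures that a point of period $n$ cannot collapse to a smaller orbit and that the $n$ cyclic shifts of a minimal cycle are genuinely distinct tuples. The finiteness hypothesis is what lets me assert $I$ is finite so that counting is meaningful; without it the statement is vacuous. I expect no analytic difficulty here — the whole argument is an elementary orbit-counting bookkeeping, and the main obstacle is simply stating the permutation-versus-orbit conventions precisely enough that the displayed equality reads as a tautology rather than a theorem.
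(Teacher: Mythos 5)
Your proposal is correct and follows essentially the same route as the paper, whose justification is simply the paragraph preceding the lemma: each minimal $n$-cycle contributes $n$ pairwise distinct fixed points of $\sigma^n$, non-cyclically-permuted minimal cycles are point-wise disjoint, and the displayed equality is the convention that cyclic permutations are counted as separate cycles. Your orbit-counting formulation merely spells out this bookkeeping (including the role of injectivity of $\sigma$ and of points of smaller period) in more detail than the paper does.
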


In Figure \ref{fig:sigmatable} we compiled small plots of $\sigma^n$  for $n=1,\ldots,27,144$. 
For $n=2$ the
function $\sigma^2$ comes close to the diagonal for the first time, but can not touch it in less than three points
by Lemma \ref{lem:sigma_sym}. If it would touch it would have to touch at least six times by 
Lemma \ref{lem:counting}, which does not seem to be the case.\par

By similar arguments, we exclude the possibility of cycles for $n=7,$ $11,$ $13,$ $16,$ $20$ and $25$. 
On the other hand the
case $n=9$ looks promising (see Figure \ref{fig:sigma9}). It seems to touch the diagonal precisely nine times which
suggests the existence of a single minimal cycle $b_9$ and its cyclic permutations. With our parameterization 
the cycle $b_9$ happens to start at $0$ and we compute a numerical error of only $\sigma^9(0) = 0.0007 \approx 0$.
Consequently the cases $n=18,27$ would also touch the diagonal, but the corresponding cycle would not be minimal.
We studied the plots till $n=100$, but did not find any other promising candidates (apart from $n=k\cdot 9$ for $k\in\NN$). 
Keep in mind that the numerical error increases with $n$, but even for $n=144$ the graph looks reasonable. 

\begin{figure}
\begin{center}
\begin{tabular}{ccccc}
\includegraphics[width=\smallpic]{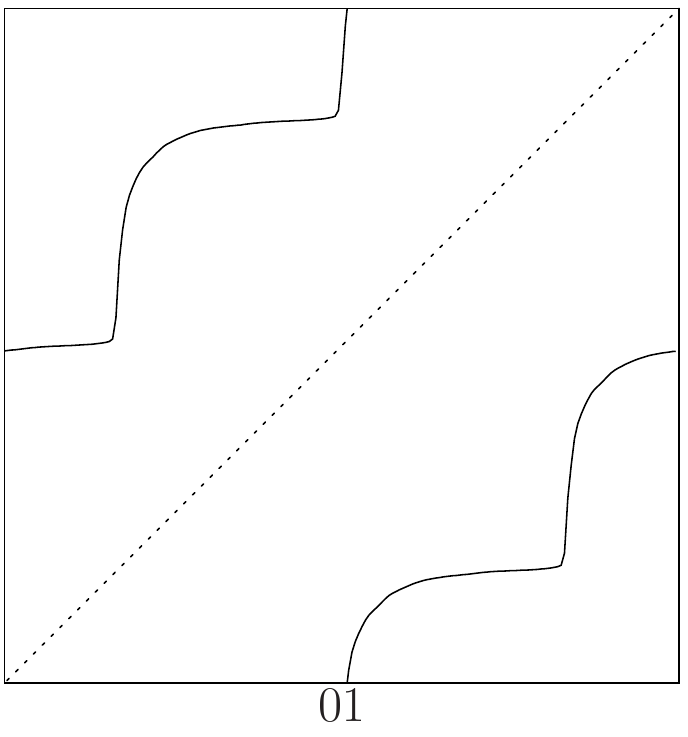} &
\includegraphics[width=\smallpic]{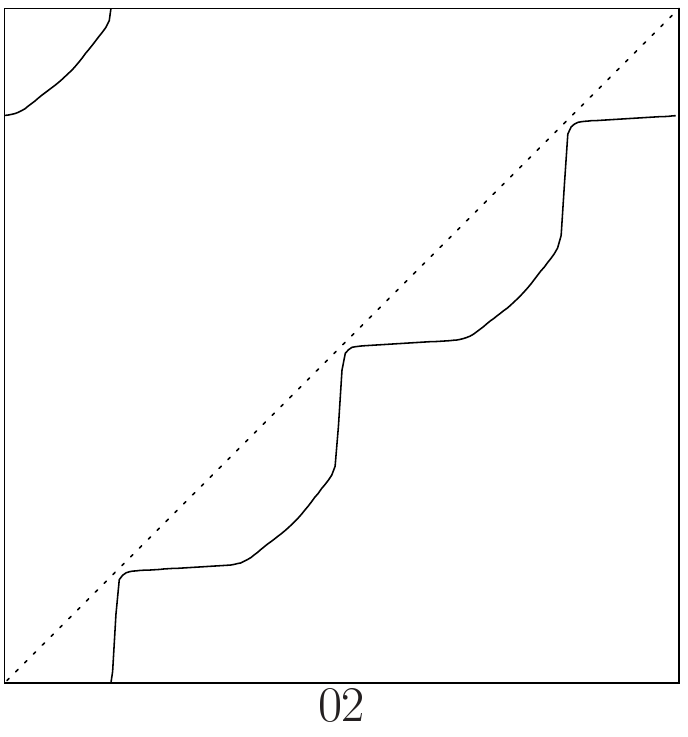} &
\includegraphics[width=\smallpic]{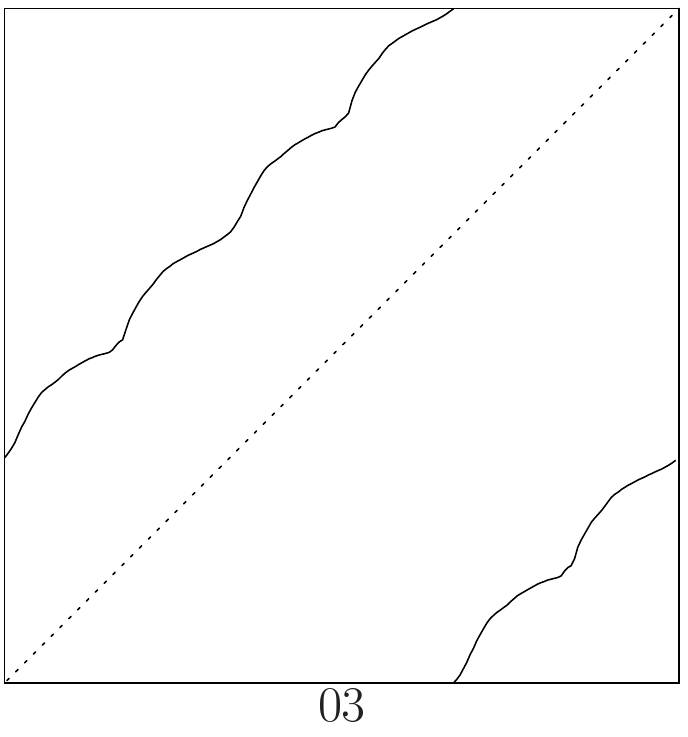} &
\includegraphics[width=\smallpic]{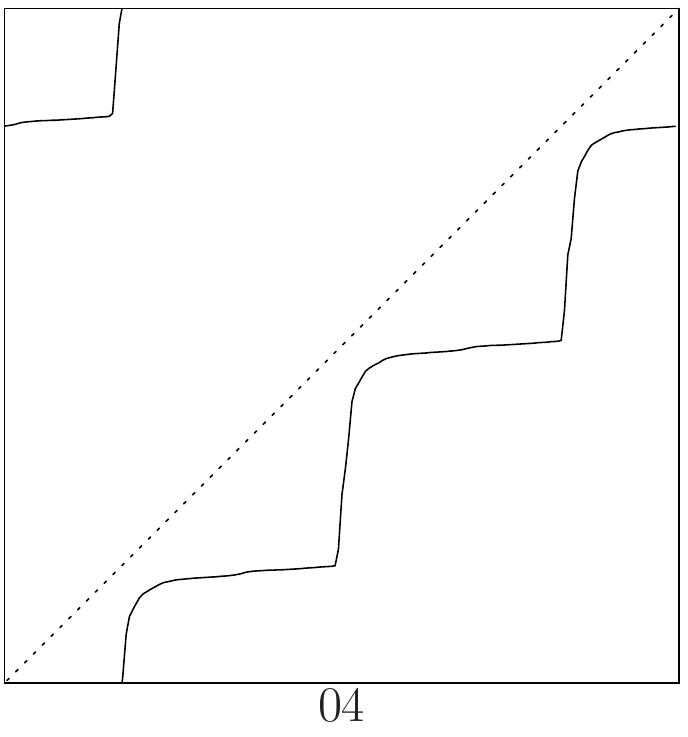} \\
\includegraphics[width=\smallpic]{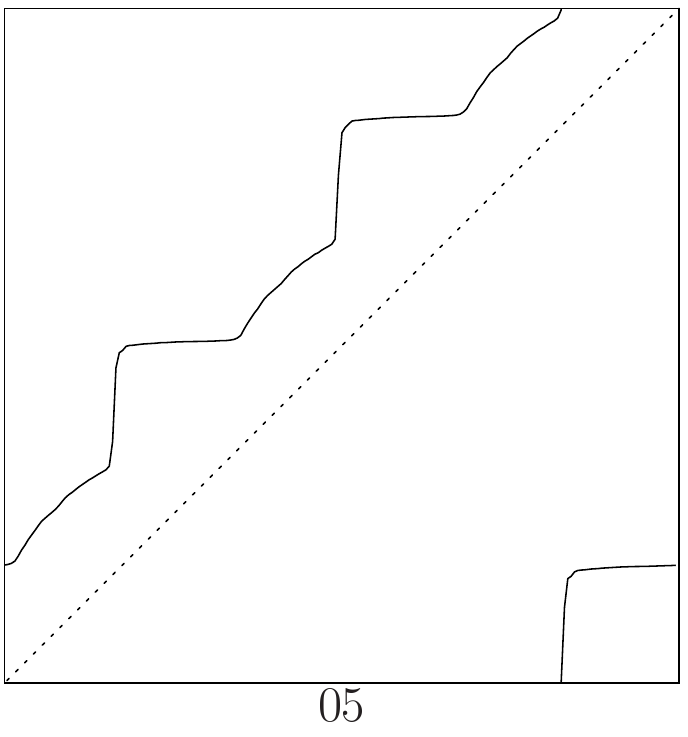} &
\includegraphics[width=\smallpic]{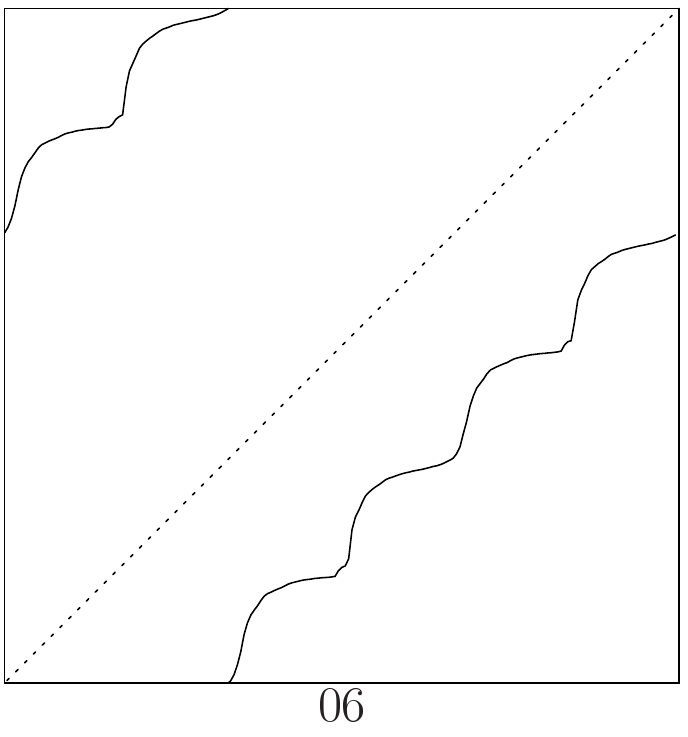} &
\includegraphics[width=\smallpic]{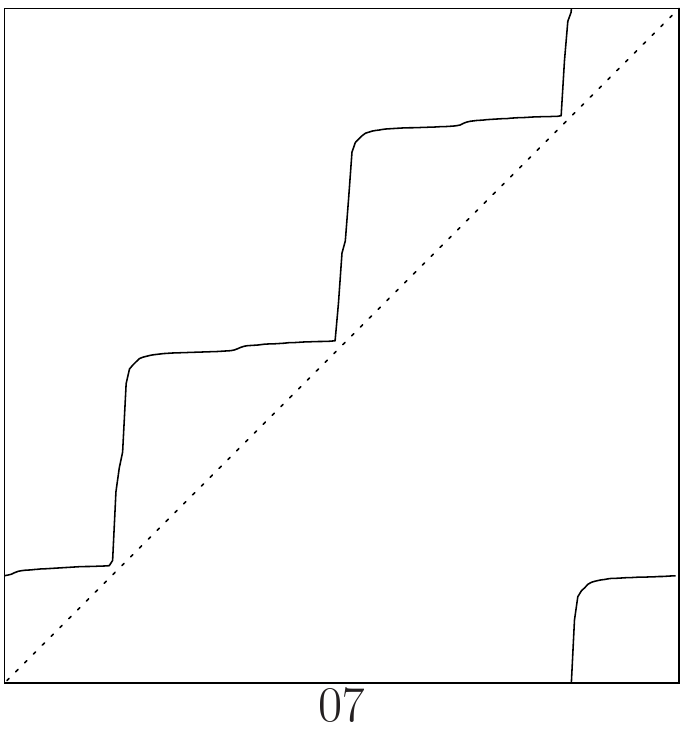} &
\includegraphics[width=\smallpic]{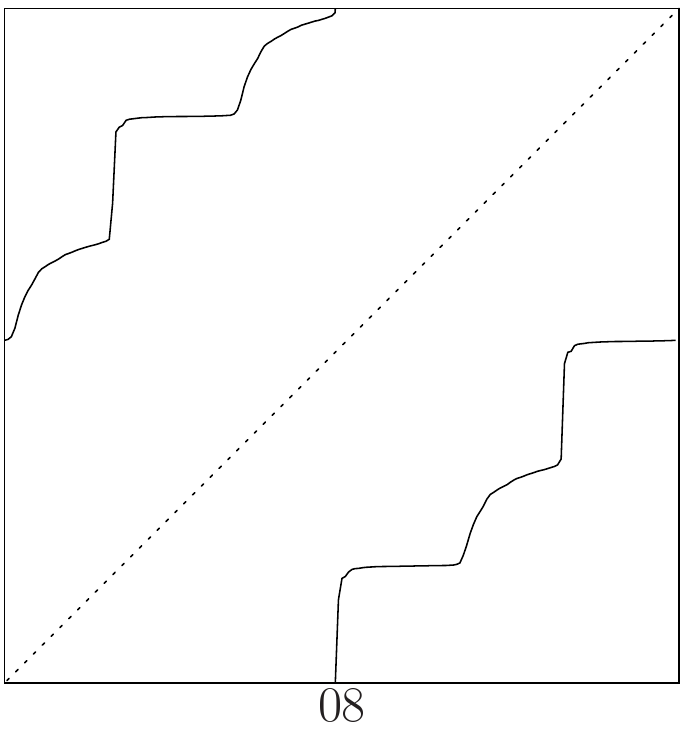} \\
\includegraphics[width=\smallpic]{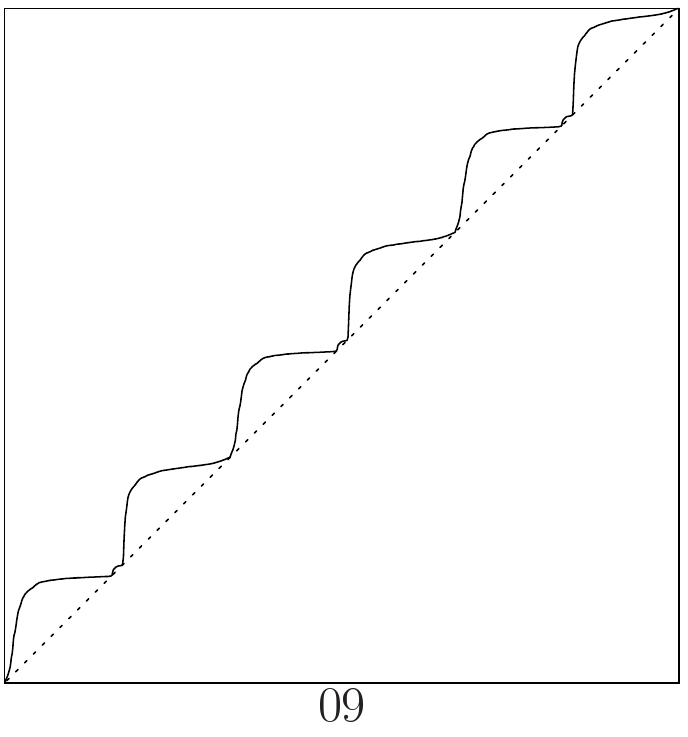} &
\includegraphics[width=\smallpic]{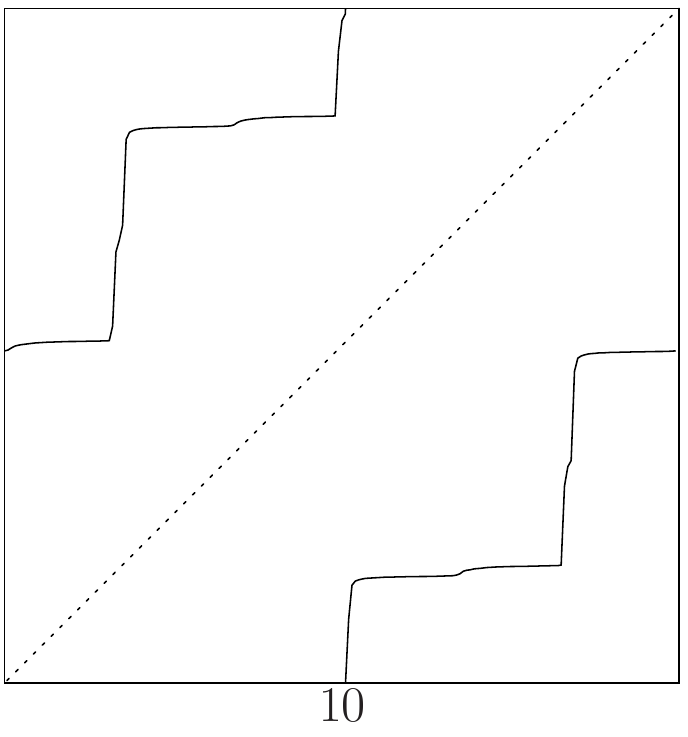} &
\includegraphics[width=\smallpic]{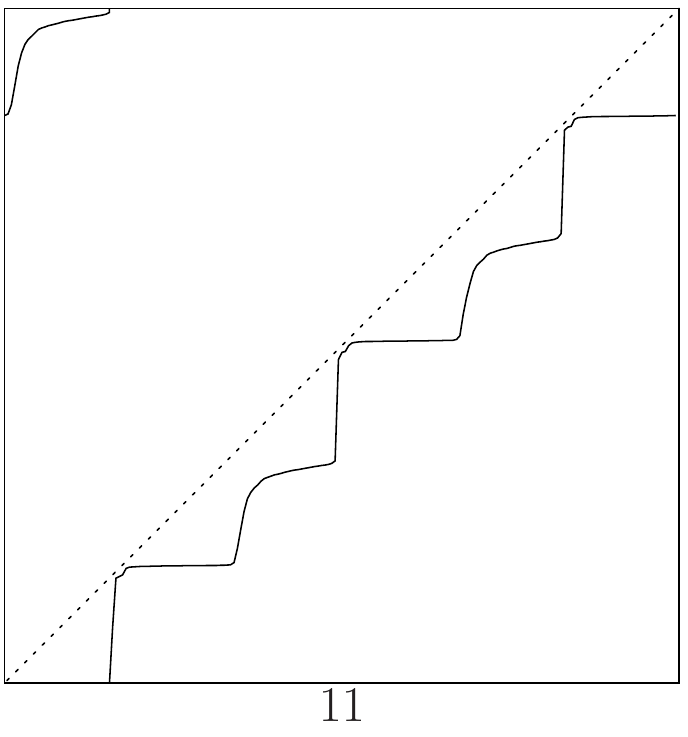} &
\includegraphics[width=\smallpic]{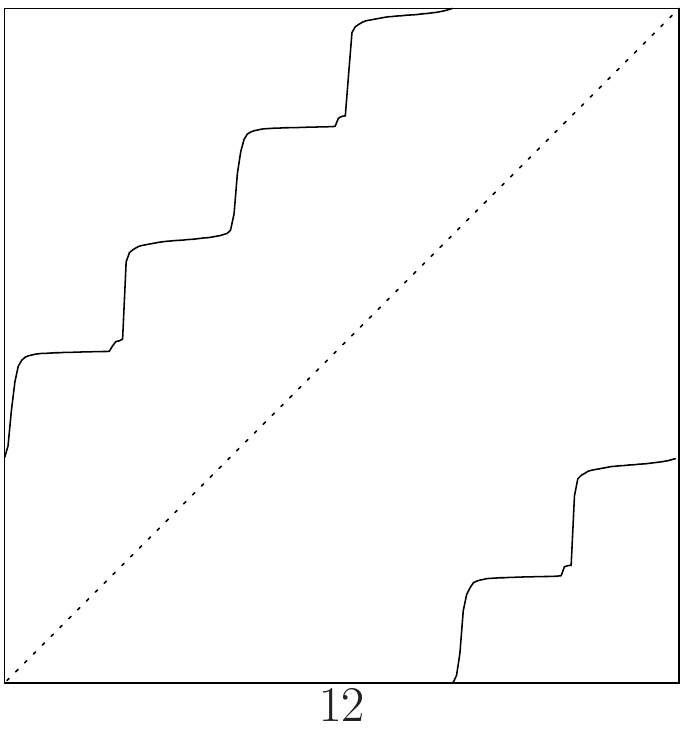} \\
\includegraphics[width=\smallpic]{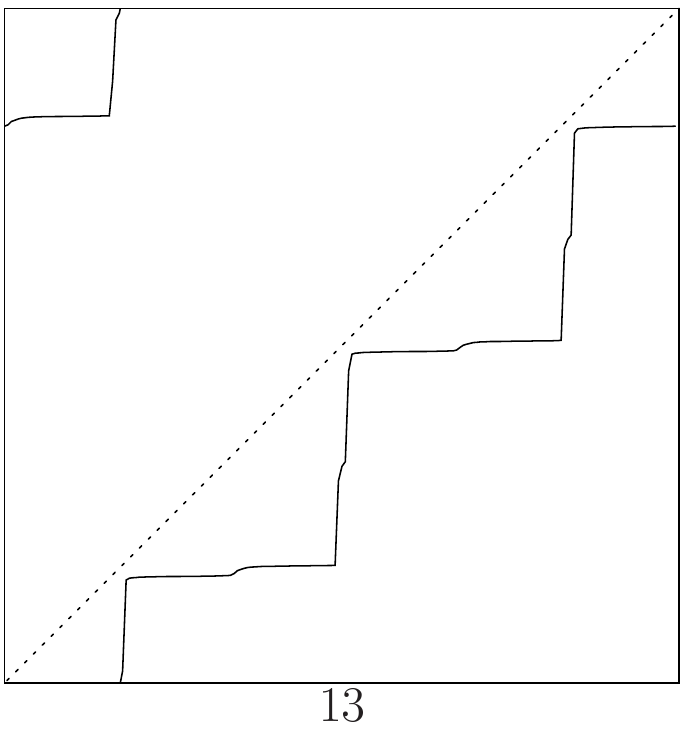} &
\includegraphics[width=\smallpic]{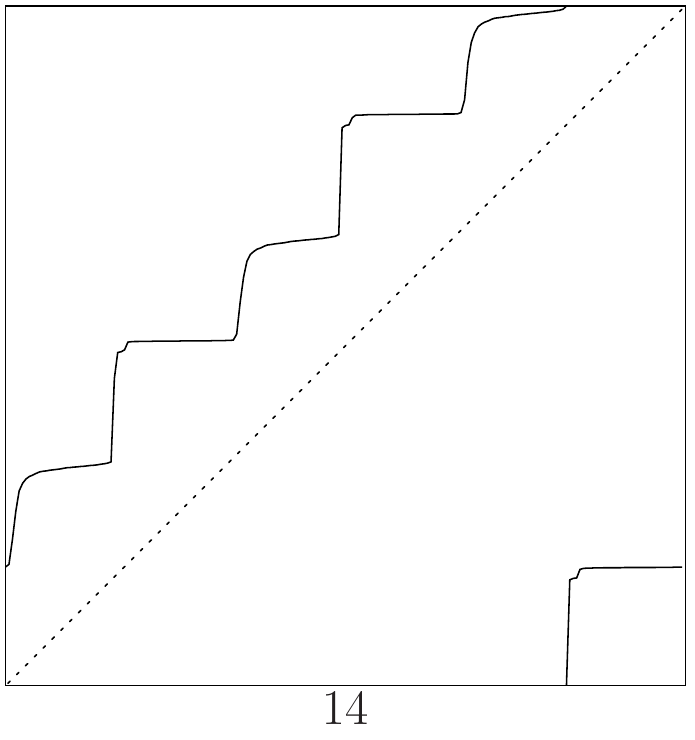} &
\includegraphics[width=\smallpic]{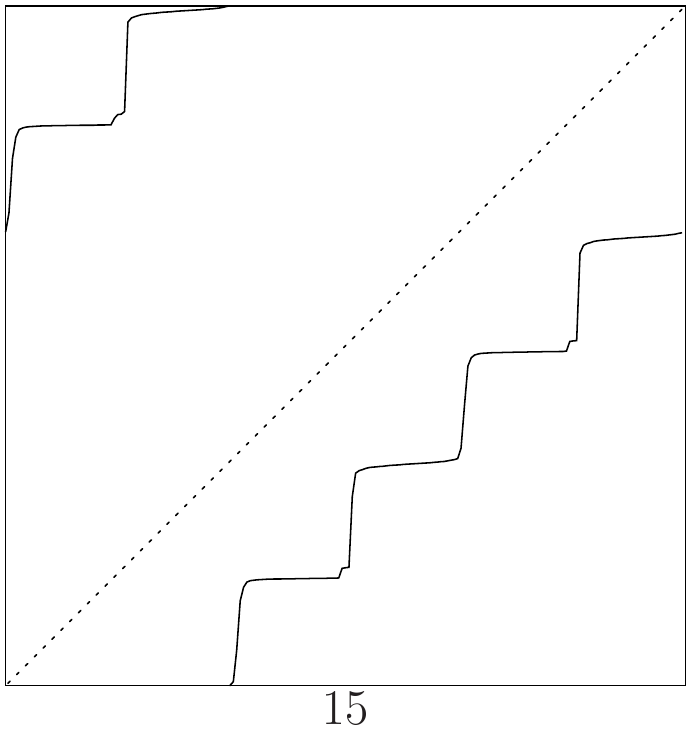} &
\includegraphics[width=\smallpic]{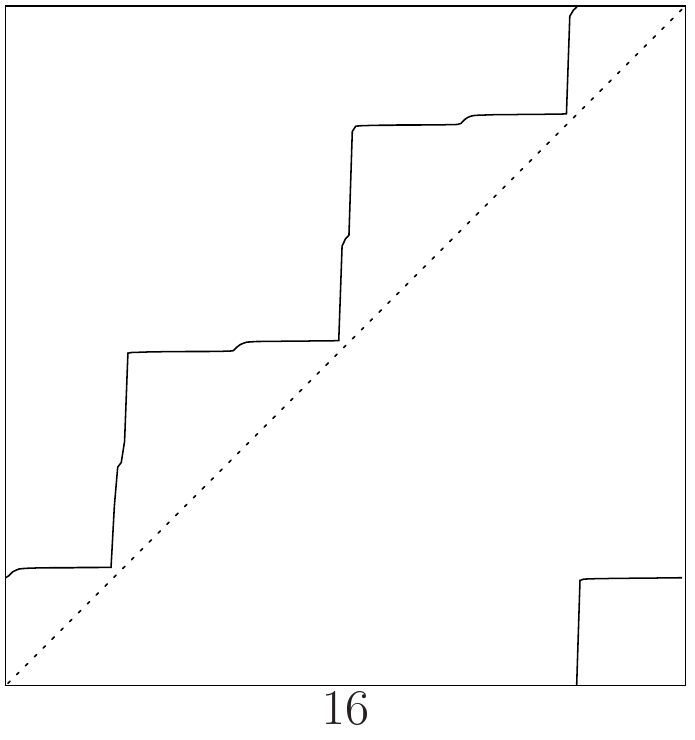} \\
\includegraphics[width=\smallpic]{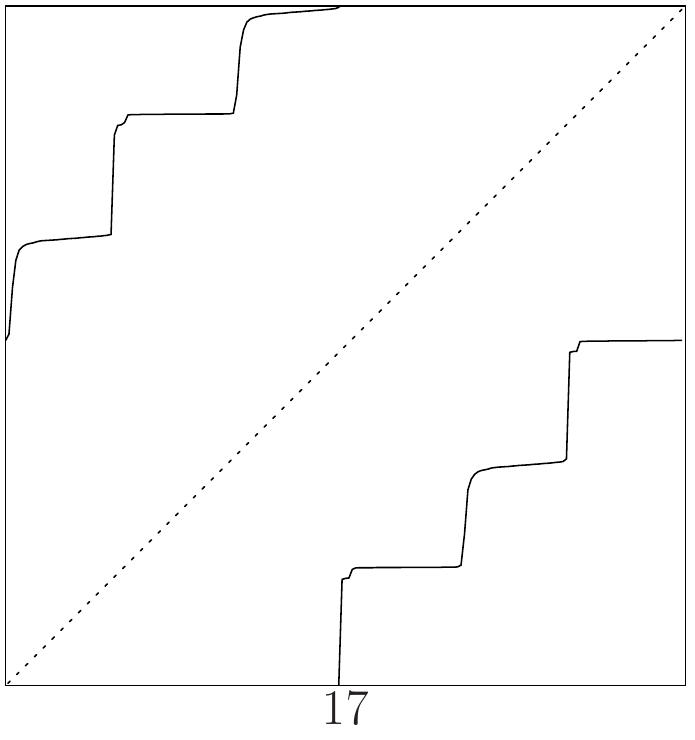} &
\includegraphics[width=\smallpic]{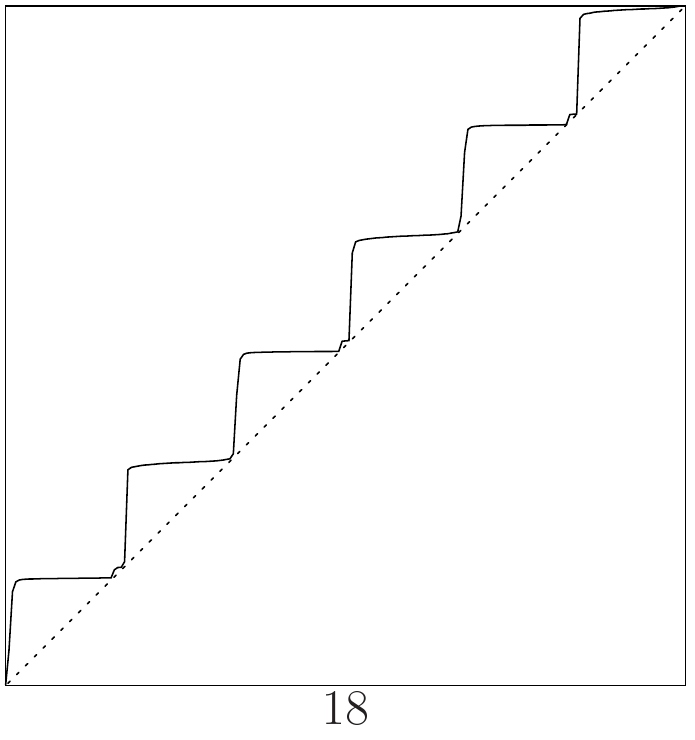} &
\includegraphics[width=\smallpic]{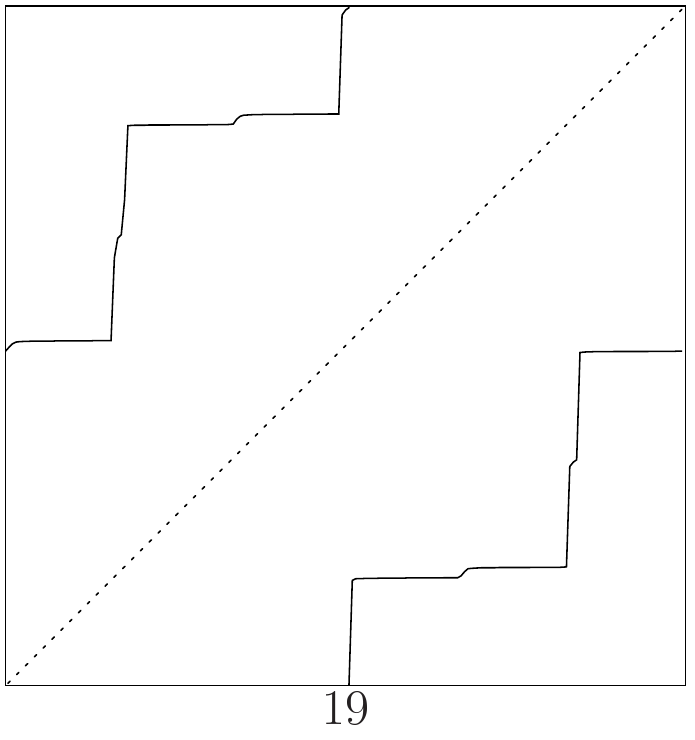} &
\includegraphics[width=\smallpic]{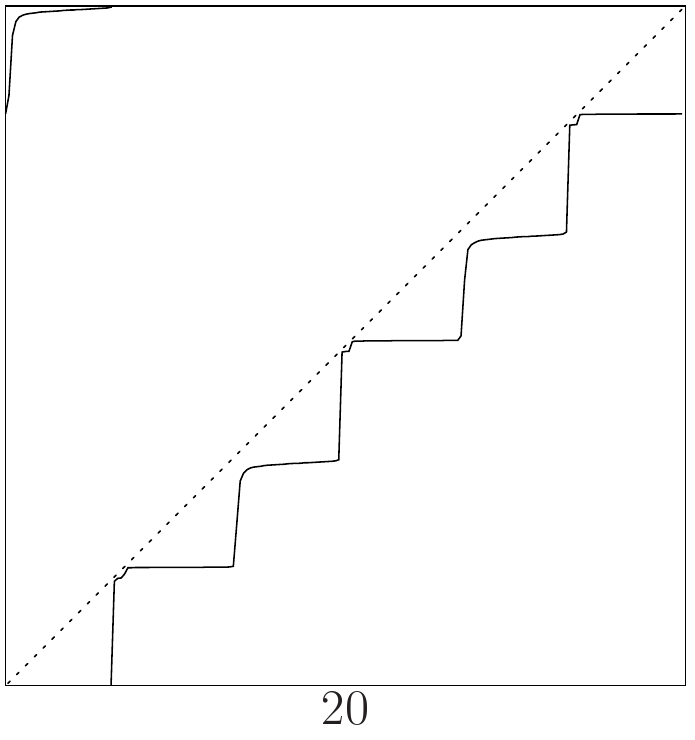} \\
\includegraphics[width=\smallpic]{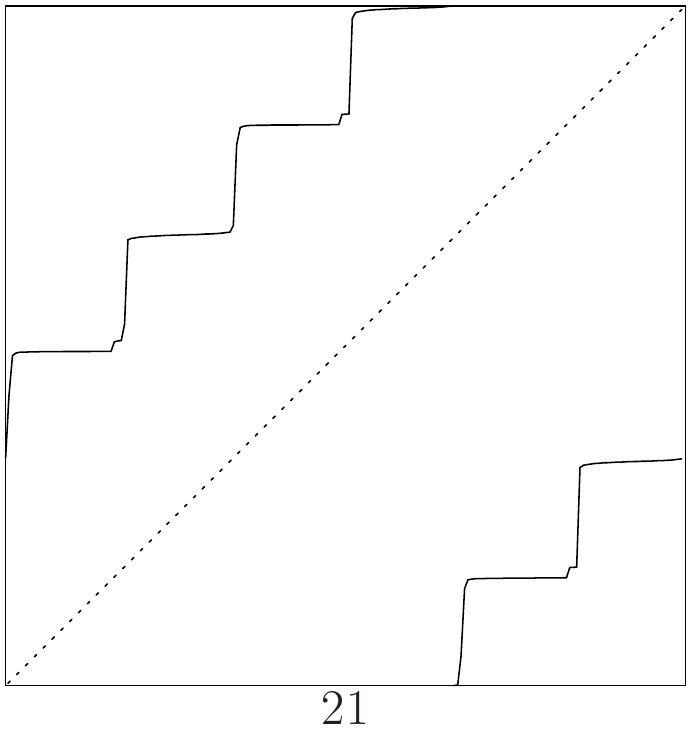} &
\includegraphics[width=\smallpic]{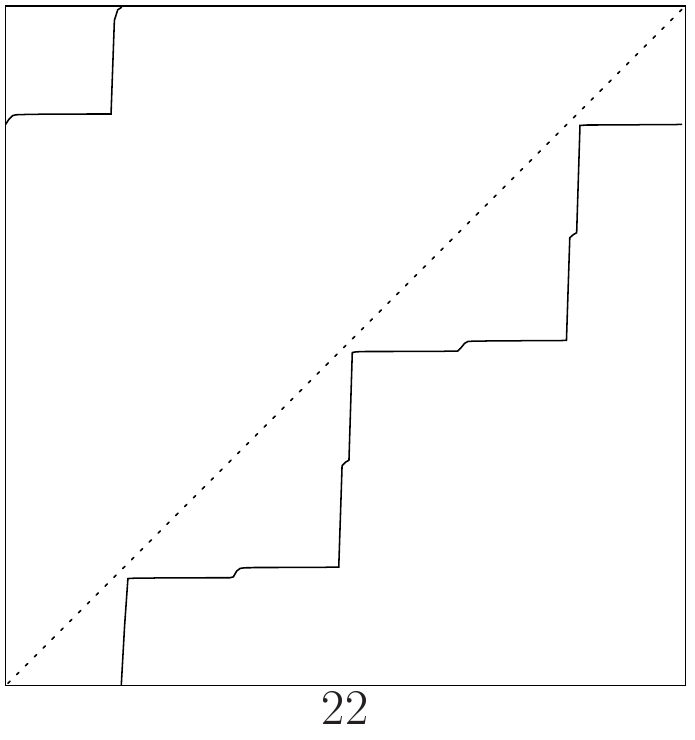} &
\includegraphics[width=\smallpic]{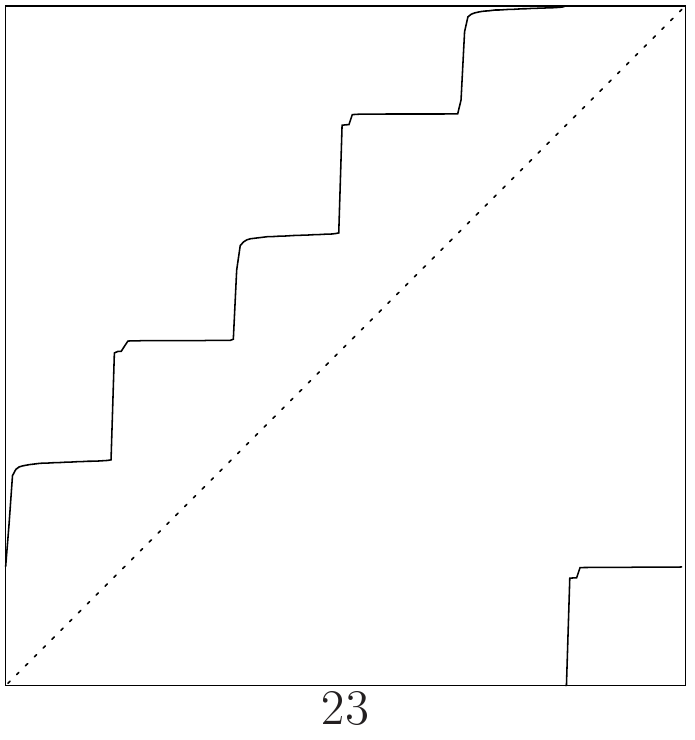} &
\includegraphics[width=\smallpic]{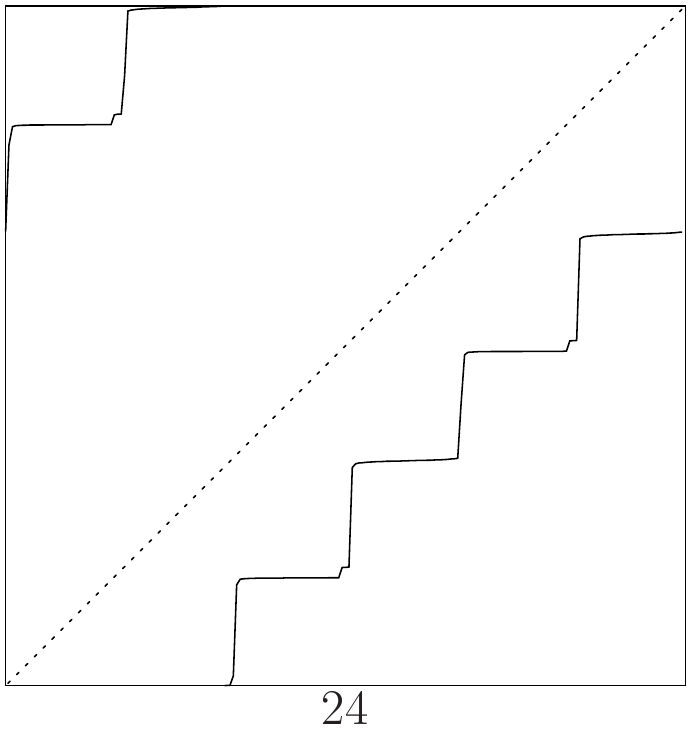} \\
\includegraphics[width=\smallpic]{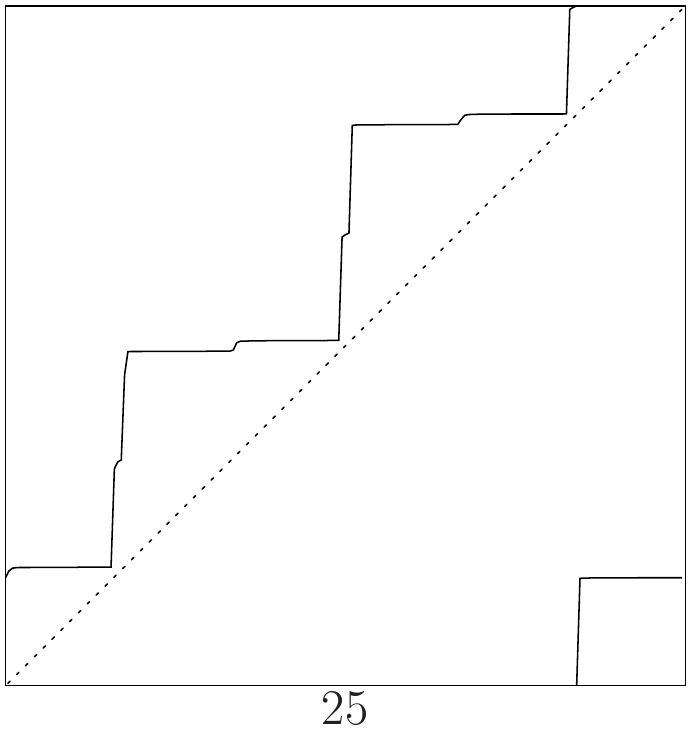} &
\includegraphics[width=\smallpic]{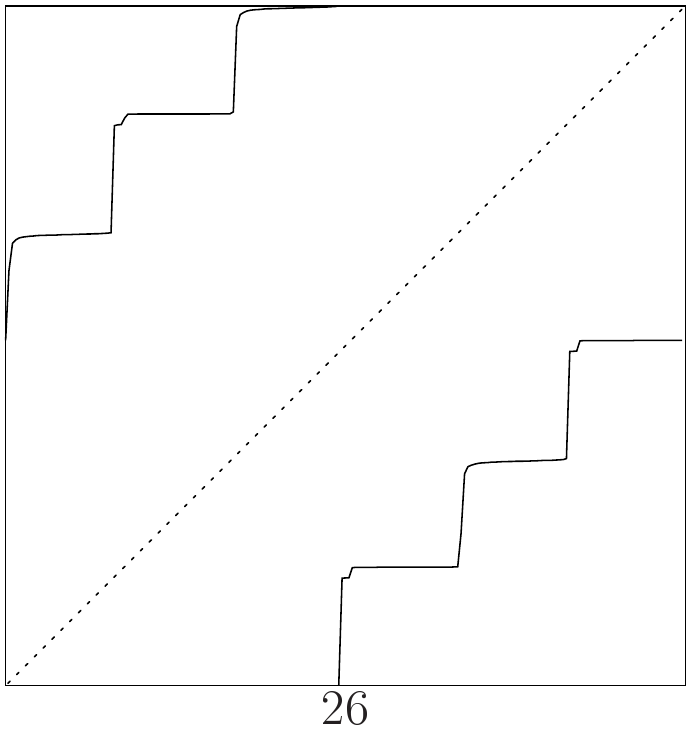} &
\includegraphics[width=\smallpic]{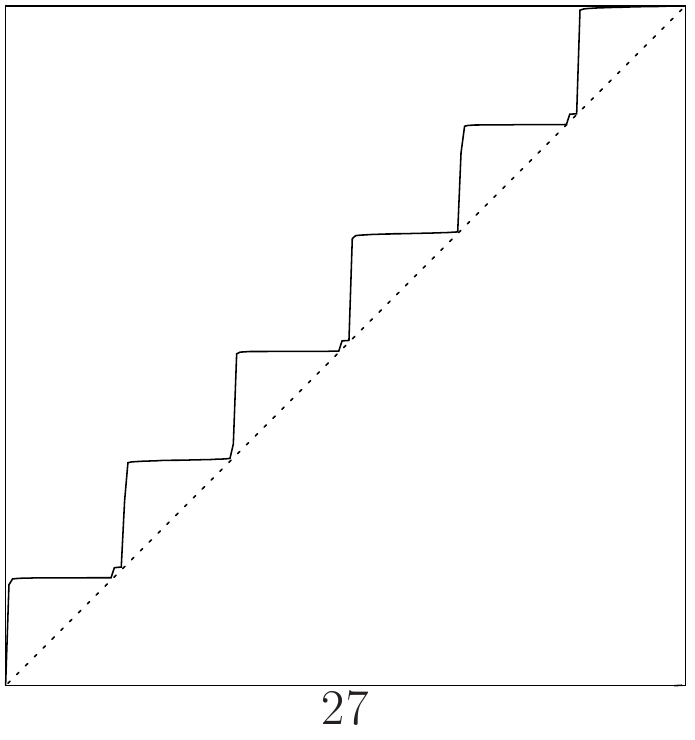} &
\includegraphics[width=\smallpic]{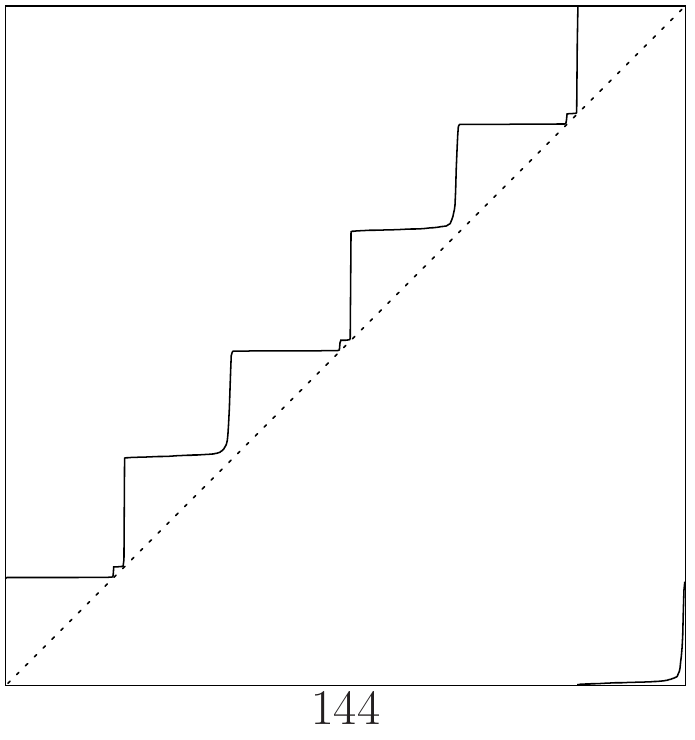}
\end{tabular}
\end{center}
\caption{
        Plots of $\sigma^n:\Sph\longrightarrow\Sph$. In plot 09 the function $\sigma^9$ seems to touch the 
        diagonal (see also Figure \ref{fig:sigma9} for an enlarged plot). 
        Accordingly $\sigma^i$ touches the diagonal in 
        plot $i$ for $i \in \{9,18,27,\dots\}$. Note that in plot 144 the numerical errors have added up so that 
        $\sigma^{144}$ no longer touches the diagonal.
        } \label{fig:sigmatable}
\end{figure}

We believe that $b_9$ is indeed a cycle (see Figure \ref{fig:billiard}):
\begin{conjecture}[Existence of nine-cycle] \label{con:ninebilliard}
Let $\trefoil: \Sph \longrightarrow \RR^3$ be the ideal trefoil, 
parameterized with constant speed such that $\trefoil(0)$ is the outer point of the trefoil on a symmetry axis.
Then $b_9=(s_0,\ldots,s_8)$ with $s_i:=\sigma^i(0)$ is a nine-cycle. Numerics suggest that $\trefoil$ passes
from 0 to 1 through $s_i$ in the sequence: $s_0, s_7,  s_5, s_3, s_1, s_8, s_6, s_4, s_2$.
\end{conjecture}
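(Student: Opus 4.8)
The plan is to use the symmetry relations of Lemma \ref{lem:sigma_sym} to collapse the nine-cycle claim to a single scalar identity, and then to read off the cyclic ordering from the reflection symmetry; the only non-formal input will be one numerical value. Throughout I use that $\trefoil(0)$ lies on a $180^\circ$ rotation axis, so that we may take $s^*=0$ in Lemma \ref{lem:sigma_sym}.

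\textbf{Step 1 (Reduction to one identity).} First I would exploit the $120^\circ$-relation $\sigma(t+1/3)=\sigma(t)+1/3$, which iterates to $\sigma^k(t+1/3)=\sigma^k(t)+1/3$ for every $k\in\NN$. Consequently, if one establishes the single identity $\sigma^3(0)=1/3$ in $\Sph$, then
\[
  \sigma^6(0)=\sigma^3(\tfrac13)=\sigma^3(0)+\tfrac13=\tfrac23,\qquad
  \sigma^9(0)=\sigma^3(\tfrac23)=\sigma^3(0)+\tfrac23=1\equiv 0,
\]
so $b_9$ closes and is a nine-cycle by the remark characterizing $n$-cycles as fixed points of $\sigma^n$. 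Thus the whole conjecture rests on proving $\sigma^3(0)=1/3$.

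\textbf{Step 2 (Reflection and explicit parameters).} Next I would put $s^*=0$ into the first relation of Lemma \ref{lem:sigma_sym}, giving $\sigma(t)=-\tau(-t)=-\sigma^{-1}(-t)$. Writing $\phi(t):=-t$, an involution fixing $0$, this reads $\sigma=\phi\circ\sigma^{-1}\circ\phi$, hence $\phi\circ\sigma\circ\phi=\sigma^{-1}$ and therefore $\phi\circ\sigma^{k}\circ\phi=\sigma^{-k}$ for all $k$. Evaluating at the fixed point $0$ yields $-\sigma^{k}(0)=\sigma^{-k}(0)$, i.e.\ $-s_k=s_{9-k}$ once Step 1 has shown $b_9$ is a nine-cycle. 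Combining this with $s_{k+3}=s_k+1/3$ from Step 1 expresses every parameter through $s_1=\sigma(0)$:
\[
  s_0=0,\quad s_3=\tfrac13,\quad s_6=\tfrac23,\qquad
  s_{3j+1}=s_1+\tfrac{j}{3},\qquad s_{3j+2}=\tfrac13-s_1+\tfrac{j}{3}\quad(j=0,1,2).
\]
A direct check then shows that with the single additional numerical input $s_1=\sigma(0)\in(1/3,1/2)$ (equivalently $s_1<s_8=1-s_1$), the nine values sort into the blocks $[0,\tfrac13)$, $[\tfrac13,\tfrac23)$, $[\tfrac23,1)$ as $0<s_7<s_5<\tfrac13<s_1<s_8<\tfrac23<s_4<s_2$, reproducing exactly the claimed sequence $s_0,s_7,s_5,s_3,s_1,s_8,s_6,s_4,s_2$.

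\textbf{Main obstacle.} The hard part will be Step 1: the identity $\sigma^3(0)=1/3$. The relations of Lemma \ref{lem:sigma_sym} constrain $\sigma$ but do not determine it. Indeed, on the quotient circle $\Sph/\tfrac13\ZZ$ the map $\sigma^3$ is merely an orientation-preserving homeomorphism that is reversed by the reflection $\phi$, and such data force only $-\sigma^3(0)\equiv\sigma^{-3}(0)$, not that the class $[0]$ be fixed. Pinning $\sigma^3(0)=1/3$ therefore requires the actual contact geometry of the ideal trefoil, for which no closed form is available; this is precisely why the statement is a conjecture and why one must ultimately rely on the numerical evidence $\sigma^9(0)=0.0007\approx 0$ reported above.
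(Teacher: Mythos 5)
Your derivation is sound, but be aware that the paper offers no proof of this statement at all: it is labelled a conjecture, and the paper's entire case for it is the direct numerical observation that $\sigma^9$ appears to touch the diagonal exactly nine times (Figure \ref{fig:sigma9}), that $\sigma^9(0)=0.0007\approx 0$, and that the listed ordering of the $s_i$ is simply read off from the computed values. Your route is genuinely different and, in two respects, sharper. First, you use the translation relation of Lemma \ref{lem:sigma_sym} to reduce the closure of $b_9$ to the single identity $\sigma^3(0)=1/3$; the paper runs this implication in the opposite direction only (in the remark after Lemma \ref{lem:piece2piece} it \emph{assumes} the nine-cycle and the fact that the nine fixed points of $\sigma^9$ form one orbit under $t\mapsto t+1/3$ to deduce $\sigma^3(s_i)=s_i+1/3$). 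Second, you derive the claimed ordering $s_0,s_7,s_5,s_3,s_1,s_8,s_6,s_4,s_2$ as a forced consequence of the reflection relation $-s_k=s_{9-k}$ (with $s^*=0$) together with the single inequality $s_1\in(1/3,1/2)$, whereas the paper treats the ordering as an independent numerical observation. Your formulas check out against the paper's reported values ($s_7=s_1-\tfrac13=0.159$, $s_5=\tfrac23-s_1=0.175$, $s_8=1-s_1=0.508$, etc.), and your identification of the irreducible numerical core is exactly right --- this is why the statement is a conjecture.

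Two small caveats. Your reduction is one-directional: $\sigma^3(0)=1/3$ is \emph{sufficient} for $\sigma^9(0)=0$ but a priori stronger, since $\sigma^3(0)$ and $1/3$ are both fixed points of $\sigma^9$ yet need not coincide without the additional (numerically supported) input $s_3\approx 0.334$; so you have replaced one unprovable numerical identity by another, slightly stronger one. Also, everything in your Step 2 rests on Conjecture \ref{con:sym} and on the existence of a global contact function as in Definition \ref{def:sigma}, both of which are themselves only numerically supported hypotheses --- worth stating explicitly as standing assumptions rather than leaving implicit.
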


\begin{figure}
\begin{center}
\includegraphics[width=\textwidth]{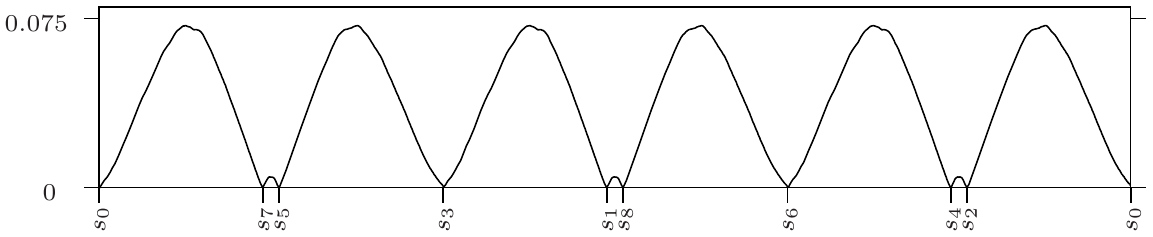} 
\caption{The plot shows graph number $9$ from Figure \ref{fig:sigmatable}
rotated by $45^\circ$. It seems to touch the diagonal 9 times
        in the points $s_0=0$, $s_7=0.159$, $s_5=0.175$,
        $s_3=0.334 \approx 1/3$, $s_1= 0.492$, $s_8=0.508$,
        $s_6=0.667 \approx 2/3$, $s_4=0.826$, $s_2=0.841$.
        This indicates the existence of a nine-cycle $b_9=(0,\sigma^1(0),\ldots,\sigma^8(0))$.
        } \label{fig:sigma9}
\end{center}
\end{figure}

Note that $b_9$ partitions the trefoil in 9 parts (see Figure \ref{fig:billiard}): Three curves
\begin{eqnarray*}
  \beta_1 &:=& \trefoil|_{[s_0,s_7]}, \\
  \beta_2 &:=& \trefoil|_{[s_6,s_4]}, \\
  \beta_3 &:=& \trefoil|_{[s_3,s_1]}, \\
  \text{or} \; \beta_i &:=& \trefoil|_{[s_{6(i-1)},s_{6(i-1)-2}]} \; \text{with} \; s_k=s_{k+9},  \\
\end{eqnarray*}
which are congruent by $120^\circ$ rotations around the $z$-axis. 
Another three curves
\begin{eqnarray*}
  \tilde{\beta_1} &:=& \trefoil|_{[s_2,s_0]}, \\
  \tilde{\beta_2} &:=& \trefoil|_{[s_8,s_6]}, \\
  \tilde{\beta_3} &:=& \trefoil|_{[s_5,s_3]}, \\
  \text{or} \; \tilde{\beta_i} &:=& \trefoil|_{[s_{6(i-1)+2},s_{6(i-1)}]} \; \text{with} \; s_k=s_{k+9},  \\
\end{eqnarray*}
which are again congruent by $120^\circ$ rotations and with each $\tilde{\beta_i}$ congruent to $\beta_i$ by a 
$180^\circ$ rotation.
And finally three curves
\begin{eqnarray*}
  \alpha_1 &:=& \trefoil|_{[s_1,s_8]}, \\
  \alpha_2 &:=& \trefoil|_{[s_7,s_5]}, \\
  \alpha_3 &:=& \trefoil|_{[s_4,s_2]}, \\
  \text{or} \; \alpha_i &:=& \trefoil|_{[s_{6(i-1)+1},s_{6(i-1)-1}]} \; \text{with} \; s_k=s_{k+9}, \\
\end{eqnarray*}
which are congruent by rotations of $120^\circ$ and self congruent by a rotation of $180^\circ$.

Because $b_9$ is a cycle, each piece of the curve gets mapped one-to-one to another piece of the curve.
\begin{lemma}[Piece to piece] \label{lem:piece2piece}
  Assume that the ideal trefoil admits a contact function $\sigma$ as in Definition 
  \ref{def:sigma} and 
  Conjectures \ref{con:sym}, \ref{con:ninebilliard} about symmetry and the
  existence of a nine cycle $b_9=(s_0,\ldots,s_8)$ hold. Then $\sigma$ maps each parameter 
  interval $[s_i,s_j]$ to $[s_{i+1},s_{j+1}]$. 
   In particular: Following the contact in $\sigma$ direction
  we get the sequence
  $\alpha_1\to\tilde{\beta_1}\to\beta_3\to \alpha_3\to\tilde{\beta_3}\to\beta_2\to 
   \alpha_2\to\tilde{\beta_2}\to\beta_1 (\to \alpha_1)$. 
  Each piece is in one-to-one contact with the next in the sequence 
  (see also Figure \ref{fig:connect}).
\end{lemma}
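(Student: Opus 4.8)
The plan is to reduce everything to the single observation that $\sigma$ is an orientation-preserving homeomorphism of the circle $\Sph$, combined with the cycle relation $\sigma(s_i)=s_{i+1}$. First I would record that, by Definition \ref{def:sigma}, $\sigma$ is continuous, bijective and orientation preserving, hence an orientation-preserving homeomorphism of $\Sph=\RR/\ZZ$. Such a map preserves the cyclic order of points, so it carries the positively oriented arc from $a$ to $b$ onto the positively oriented arc from $\sigma(a)$ to $\sigma(b)$; equivalently, lifting $\sigma$ to a strictly increasing $\tilde\sigma:\RR\to\RR$ with $\tilde\sigma(x+1)=\tilde\sigma(x)+1$ makes this ``intervals map to intervals'' statement immediate. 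With the convention that $[s_i,s_j]\subset\Sph$ denotes the positively oriented arc (the parameter interval swept out by the corresponding piece of $\trefoil$), this gives $\sigma([a,b])=[\sigma(a),\sigma(b)]$ for all $a,b\in\Sph$.

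Next I would feed in the cycle. By Conjecture \ref{con:ninebilliard} and the Remark following the definition of a cycle, $s_i=\sigma^i(0)$ and $\sigma(s_8)=s_0$, so $\sigma(s_i)=s_{i+1}$ for every $i$, reading indices modulo $9$ (i.e. $s_k=s_{k+9}$). Applying the interval identity with $a=s_i$ and $b=s_j$ then yields at once
\[
  \sigma([s_i,s_j]) = [\sigma(s_i),\sigma(s_j)] = [s_{i+1},s_{j+1}],
\]
which is the first assertion of the lemma. This holds for arbitrary $i,j$ and uses only the homeomorphism property and the cycle relation; the ordering of the $s_i$ along the curve stated in Conjecture \ref{con:ninebilliard} is needed merely to confirm that the arcs $[s_i,s_j]$ appearing in the definitions of $\alpha_i,\beta_i,\tilde{\beta_i}$ really are the intended pieces.

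For the ``in particular'' statement I would simply substitute the defining index pairs of the nine pieces into this formula and match the result against the definitions, shifting indices by one mod $9$. For instance $\alpha_1=\trefoil|_{[s_1,s_8]}$ is sent to $[s_2,s_9]=[s_2,s_0]=\tilde{\beta_1}$, then $\tilde{\beta_1}=\trefoil|_{[s_2,s_0]}$ to $[s_3,s_1]=\beta_3$, and so on around the loop; carrying this out for all nine pieces reproduces exactly the sequence $\alpha_1\to\tilde{\beta_1}\to\beta_3\to\alpha_3\to\tilde{\beta_3}\to\beta_2\to\alpha_2\to\tilde{\beta_2}\to\beta_1(\to\alpha_1)$. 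Finally, one-to-one contact of consecutive pieces is automatic: for each $s$ in a piece $P$, Definition \ref{def:sigma} says $\trefoil(s)-\trefoil(\sigma(s))$ is a contact chord, and since $\sigma$ restricts to a bijection of $P$ onto $\sigma(P)$, every parameter of $P$ has exactly one contact partner in the next piece and conversely.

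I do not expect a genuine obstacle here: the content is entirely bookkeeping. The only points requiring care are fixing the positive-arc convention for $[s_i,s_j]$ so that orientation-preservation gives the clean interval identity, and correctly propagating the index shift modulo $9$ across the nine pieces. Both are routine once the homeomorphism property of $\sigma$ is in hand.
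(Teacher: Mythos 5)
Your argument is correct and is essentially the paper's own proof: both rest on the single observation that $\sigma$ is a continuous, orientation-preserving bijection of $\Sph$ with $\sigma(s_i)=s_{i+1}$, so that $\sigma([s_i,s_j])=[\sigma(s_i),\sigma(s_j)]=[s_{i+1},s_{j+1}]$. The paper states this in one sentence, while you additionally spell out the lift to $\RR$ and the index bookkeeping for the nine pieces, which is fine but adds nothing beyond the paper's reasoning.
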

\begin{proof} By definition $s_i$ is mapped to $s_{i+1}$ and by Definition
\ref{def:sigma} the contact function $\sigma$ is continuous and orientation
preserving so the interval $[s_i,s_j]$ gets mapped to
  $[\sigma(s_i),\sigma(s_j)] = [s_{i+1},s_{j+1}]$.
\end{proof}

One further remark about the plots in Figure \ref{fig:sigmatable}. If $s_i \in \Sph$ is a solution of $\sigma^9(s_i)=s_i$ then by Lemma 
\ref{lem:sigma_sym} the parameter $r=s_i+k/3$ is also a solution of $\sigma^9(r)=\sigma^9(s_i+k/3)=s_i+k/3=r$ for $k \in \{0,1,2\}$. Since there are
presumably only nine solutions $s_i$, they happen to fall in three classes represented by $s_0, s_1, s_2$ and with $s_{i+3k} = s_i+k/3$ the remaining six
are defined. Consequently we find that $\sigma^3(s_i)=s_{i+3}=s_i+1/3$, i.e. there are nine solutions 
of $\sigma^3(s)=s+1/3$ which can be seen in plot number 3 of Figure \ref{fig:sigmatable}. 
Similarly, there are nine solutions of $\sigma^6(s)=s+2/3$ in plot number 6 and so on.

We now briefly discuss the relationship between particular points in
the curvature plot and the closed-cycle points $(s_0,\cdots,s_8)$,
i.e. the partitioning introduced above. In Figure \ref{fig:curvature}
we show the curvature plot scaled by the thickness $\Delta$ on the interval $[0,1/3] = [s0,s3]$. 
Since curvature is confined in $[0,1/\Delta]$ for thick knots this always gives a comparable graph.
Due to the $3$-symmetry, the plots on the intervals $[1/3,2/3]$ and $[2/3,1]$ are identical.
The $180$-degree rotation symmetry shows up in the plot as a symmetry
around $(s_7+s_5)/2$, the center of a self-congruent piece $\alpha_2$.
The curvature profile is close to constant $.5$ on the major part 
$\beta_1$ and $\tilde{\beta}_3$. 
A significant change occurs at the transition points
between $\alpha_i$ and $\beta_i$, where it reaches its maximum
at the junction points $s_7$ and $s_5$, where curvature is believed to
be active\cite{bookchapter, BPP08}. 
The spikes of our computation do not achieve the maximal value, and there is a 
local maximum at the center of an $\alpha_i$ piece. We believe these deviations from
earlier observations are numerical artefacts due to the Fourier 
representation used to compute this trefoil.\footnote{In fact the curvature function needs not even to
 converge, as one approaches an ideal shape \cite[Section 2.5]{Ge10}}.
The alignment of the closed-cycle points and the points where
curvature seems active only enforces that all the numerical pieces
fit together nicely, which is a good indication, that these are not
numerical artefacts.

\begin{figure}
\begin{center}
\includegraphics[width=0.6\textwidth]{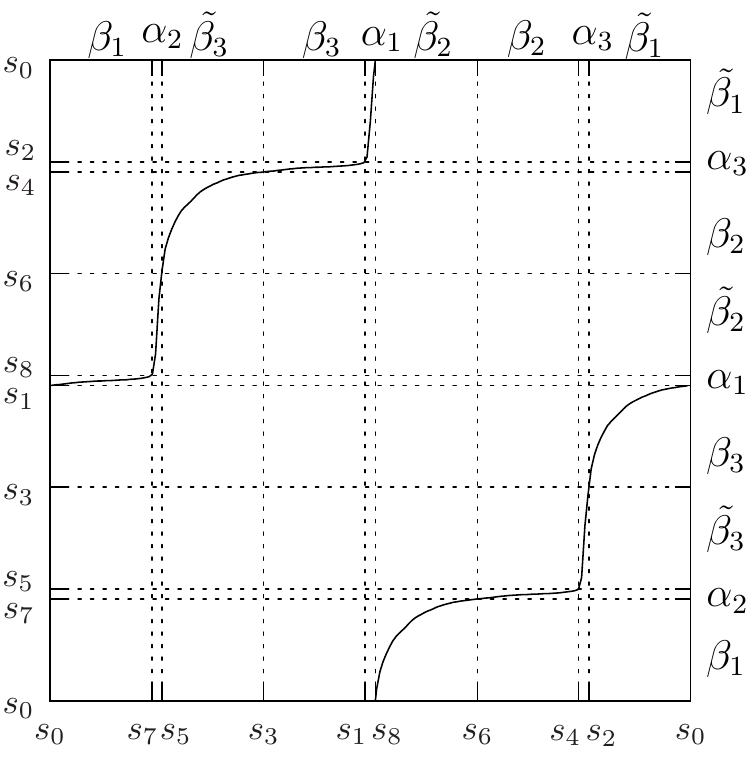} 
\caption{Plotting the $\sigma$ function with a grid of the partition points $s_i$ we can read off which piece of 
         the curve is in contact (in the $\sigma$ direction) with which other piece. For example starting at the
         top with $\alpha_1$ we see that $\sigma$ maps its parameter-interval to the parameter-interval 
         of $\tilde{\beta_1}$ on the right, which itself gets mapped from top $\tilde{\beta_1}$ to $\beta_3$
         on the right and so on (see Lemma \ref{lem:piece2piece}).
        } \label{fig:connect}
\end{center}
\end{figure}

\begin{figure} 
  \begin{center}
      \includegraphics[width=\textwidth]{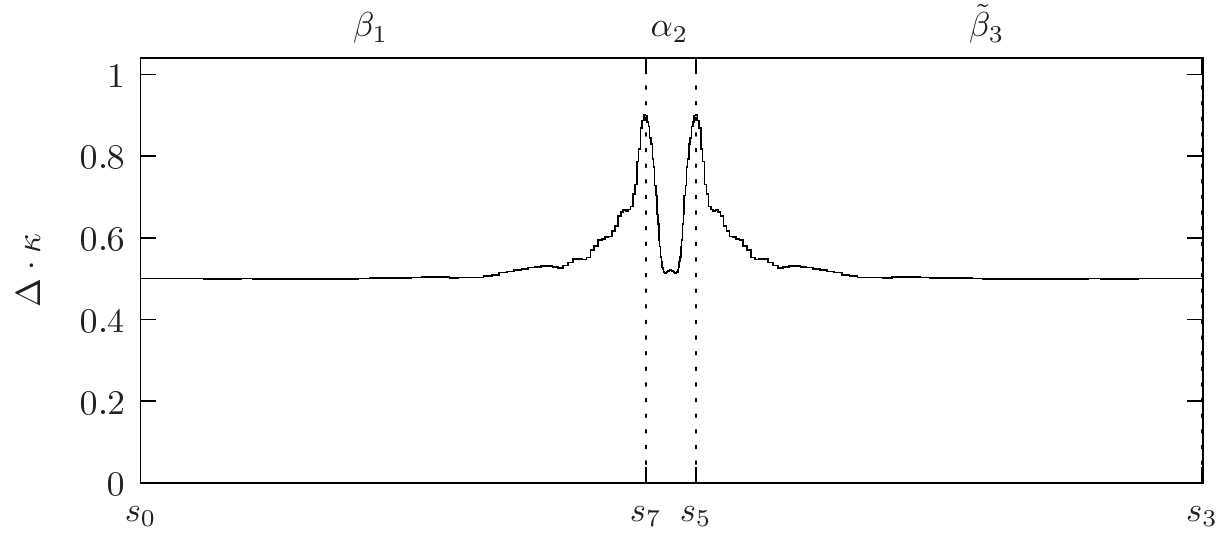}
  \end{center} 
 \caption{The curvature $\kappa$ of $\trefoil$ is confined to $[0,1/\Delta]$. Consequently the graph shows $\kappa\cdot\Delta$.
          Since $\kappa$ is three-periodic, for greater detail we show only one third of the interval. 
          The maximal curvature is attained at $s_7$ and $s_5$.
          } 
 \label{fig:curvature}
\end{figure}

\begin{figure}
\begin{center}
\includegraphics[width=\textwidth]{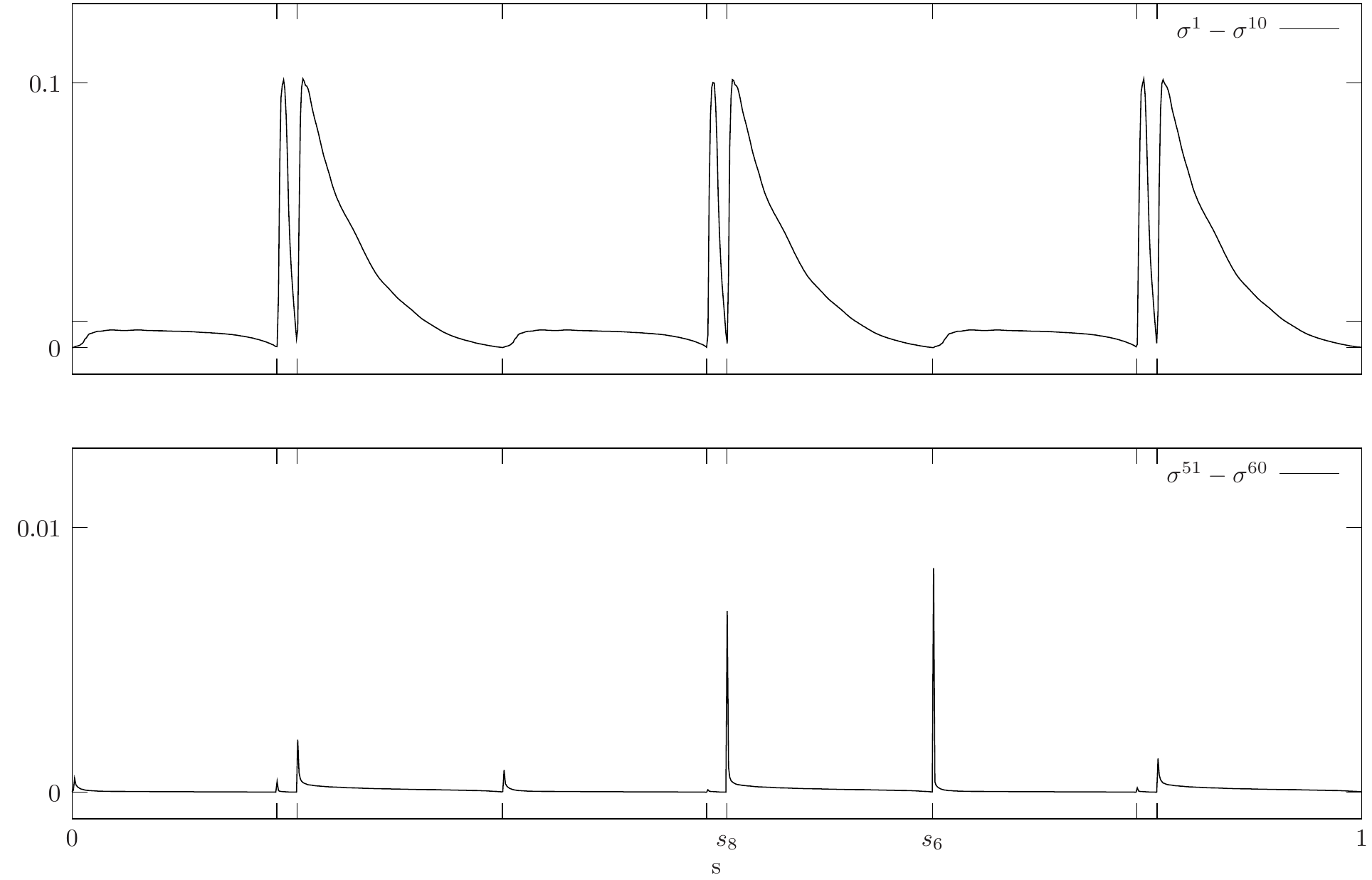} 
\caption{
        Since numerical experiments find that $\sigma^{i+9}-\sigma^i$ converges point-wise to $0$ 
        for $i\to\infty$ we conjecture that 
        the cycle $b_9$ acts as an attractor, i.e. $(\sigma^{9i}(s), \sigma^{9i+1}(s), \ldots,\sigma^{9i+8}(s))$ converges
        to $b_9$ up to a cyclic permutation. Notice that the convergence is only point-wise
        and cannot be uniform since $\sigma$ is continuous; with enough samples we would
        see large spikes after each $s_i$ as behind $s_6$ and $s_8$ above.
        } \label{fig:attractor}
\end{center}
\end{figure}

\begin{figure}
\begin{center}
\includegraphics[width=\textwidth]{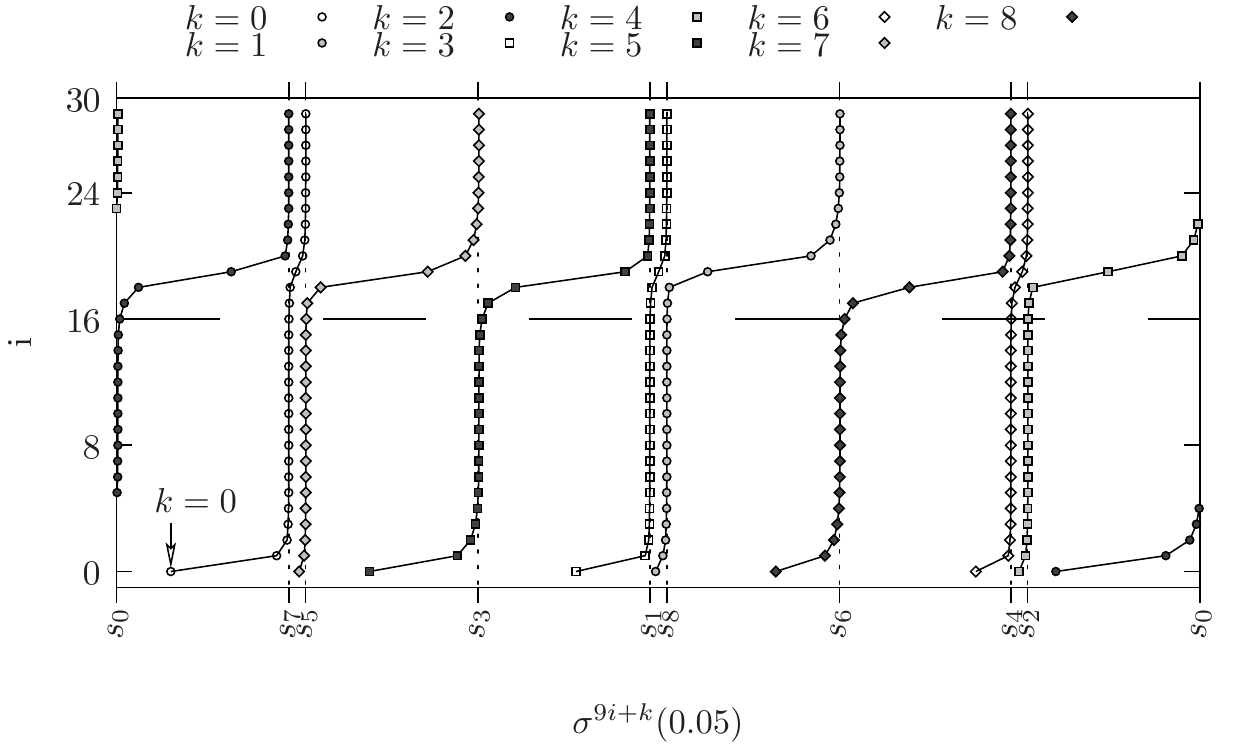} 
\caption{The cycle $b_9$ seems to be an attractor: starting at the arbitrary point $t=0.05$ the sequence $\sigma^{9i}(t)$ 
         seems to converge to $s_7$ for growing $i$. After $i>16$ iterations, presumably, errors in the 
         approximation and the numerics add up and $\sigma^{9\cdot 18}(t)$ is in the next 
         interval $(s_7,s_5)$. Starting from $\sigma^{k}(t)$ in other intervals shows a similar behavior.
        }
         \label{fig:attractorlines}
\end{center}
\end{figure}

Taking a second look at Figure \ref{fig:sigmatable} it looks like $\sigma^n$ is approaching a 
step function as $n$ increases. What are the accumulation points of the sequence $\{\sigma^i(t)\}_i$ as a function of 
$t\in\Sph$?  Looking at $(\sigma^{i}(t), \sigma^{i+1}(t), \ldots, \sigma^{i+8}(t))$ for arbitrary $t \in \Sph$
it seems to converge to $b_9$ up to a cyclic permutation for $i$ large enough, i.e. $b_9$ contains the
accumulation-points of the above sequence. Figure \ref{fig:attractor} shows some numeric values
of $\sigma^{i+9}-\sigma^i$ which seems to converge 
point-wise 
to $0$ for $i\to\infty$. An arbitrary point $t$ between neighboring points $s_l$ and $s_r$ gets by each application of
$\sigma^9$ repelled from the left by $s_l$ and attracted to the right by $s_r$ (see 
Figure \ref{fig:attractorlines}).\footnote{We would like to thank E. Starostin for encouraging 
us to take a closer look at this issue.} 
Note that the attactor has a direction that is induced by the chirality of the trefoil
(left or right-handed) and the choice of the contact function $\sigma$ made in Definition \ref{def:sigma}.

\begin{conjecture}[Attractor]\label{con:attractor}
Let $b_n \in (\Sph)^n$ be a cycle. We call $b_n$ an attractor if
for any $t \in \Sph$ and fixed $k \in \{0,\ldots,n-1\}$ the $n$-tuple 
$(\sigma^i(t), \sigma^{i+1}(t), \ldots,\sigma^{i+n-1}(t))$ 
converges to a cyclic permutation of $b_n$ for
$j\to\infty$ with $i=nj+k$.
The $b_9$ cycle of Conjecture \ref{con:ninebilliard} is an attractor.
\end{conjecture}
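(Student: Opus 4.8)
The plan is to read Conjecture \ref{con:attractor} as a statement about the one-dimensional dynamics of the circle map $g := \sigma^9$, and to reduce it to the classical fact that an orientation-preserving homeomorphism of $\Sph$ with isolated fixed points drives every orbit to a fixed point. By Definition \ref{def:sigma} the contact function $\sigma$ is a continuous, orientation-preserving bijection of $\Sph$, i.e.\ an orientation-preserving circle homeomorphism, and hence so is $g = \sigma^9$. By Conjecture \ref{con:ninebilliard} each cycle point satisfies $g(s_m) = \sigma^9(\sigma^m(0)) = \sigma^m(\sigma^9(0)) = \sigma^m(0) = s_m$, so $\{s_0,\ldots,s_8\} \subseteq \mathrm{Fix}(g)$.

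The crucial input, and the step I expect to be the main obstacle, is to show that these nine points are \emph{all} of $\mathrm{Fix}(g)$, i.e.\ that $\sigma^9$ meets the diagonal in exactly nine points. This is precisely what Figure \ref{fig:sigma9} displays but what the numerics cannot certify: since $\sigma$ is known only approximately and arises from the unknown contact structure of the ideal trefoil rather than in closed form, ruling out further, possibly nearly tangential, intersections of $\sigma^9$ with the diagonal lies beyond the present data. A rigorous argument would need either analytic control of $\sigma$ near each $s_m$ (for instance a bound $g'(s_m) \neq 1$ making each $s_m$ a hyperbolic fixed point, combined with the count in Lemma \ref{lem:counting}) or a global monotonicity estimate; neither is currently available, which is why the statement is posed as a conjecture.

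Granting $\mathrm{Fix}(g) = \{s_0,\ldots,s_8\}$, the rest is routine. The nine fixed points cut $\Sph$ into nine open arcs, each mapped into itself by $g$ since $g$ preserves orientation and fixes the endpoints. On each arc $g - \mathrm{id}$, read on a lift to $\RR$, is continuous and nonvanishing, hence of constant sign; thus the orbit $j \mapsto g^j(x)$ is monotone and bounded, so it converges, and its limit is a fixed point of $g$, necessarily an endpoint of the arc. Therefore $g^j(x) \to s_m$ for some $m$ and every $x \in \Sph$. (The direction of attraction suggested by Figure \ref{fig:attractorlines} follows once these signs are pinned down, but is not needed here, since any endpoint already belongs to $b_9$.) Finally, writing $i = 9j+k$ and $x := \sigma^k(t)$, we get $\sigma^{i+p}(t) = \sigma^p(g^j(x)) \to \sigma^p(s_m) = s_{m+p}$ for $p = 0,\ldots,8$ by continuity of $\sigma^p$, so the tuple $(\sigma^i(t),\ldots,\sigma^{i+8}(t))$ converges to $(s_m,\ldots,s_{m+8})$, a cyclic permutation of $b_9$, which is Conjecture \ref{con:attractor}.
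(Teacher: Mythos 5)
This statement is labelled a \emph{conjecture}, and the paper offers no proof of it --- only the numerical evidence of Figures \ref{fig:attractor} and \ref{fig:attractorlines}. Your proposal is therefore correctly calibrated: you prove exactly as much as can be proved and isolate the genuinely open step. The deductive part of your argument coincides with what the paper does prove in Lemma \ref{lem:one_attractor}, namely that $b_n$ is an attractor if and only if there is no other $n$-cycle: the authors restrict $\sigma^n$ to a closed arc between neighbouring cycle parameters, note that the orbit of an interior point is monotone (orientation preservation) and bounded, hence convergent, and that a limit other than an endpoint would be a new fixed point of $\sigma^n$ and hence a new cycle --- the same monotone-orbit argument you give. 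Your hypothesis that $\sigma^9$ meets the diagonal in exactly the nine points $s_0,\dots,s_8$ is precisely the paper's condition ``no other $9$-cycle'' (compare Lemma \ref{lem:counting}), and your final translation of $g^j(x)\to s_m$ into convergence of the $9$-tuple to a cyclic permutation of $b_9$ is the same bookkeeping. The one ingredient you could not supply --- certifying that there are no further, possibly tangential, intersections of $\sigma^9$ with the diagonal --- is exactly what the paper cannot supply either, which is why the statement remains a conjecture; there is no gap in your argument beyond the one you explicitly flagged.
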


The existence of an attractor rules out the existence of other cycles:

\begin{lemma}\label{lem:one_attractor}
Assume $\trefoil$ has a contact function $\sigma$ as in Definition \ref{def:sigma} and 
let $b_n \in (\Sph)^n$ be a $n$-cycle as in Conjecture \ref{con:ninebilliard}.
 Then $b_n$ is an attractor in the sense of Conjecture \ref{con:attractor}
iff there is no other $n$-cycle on $\trefoil$.
\end{lemma}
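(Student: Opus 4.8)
The plan is to recast everything in terms of the single circle homeomorphism $f:=\sigma^n$ and then invoke the elementary dynamics of orientation-preserving homeomorphisms of $\Sph$ that possess fixed points. The whole statement is really a fact about such maps once the definitions are unwound.

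First I would reformulate both sides of the ``iff'' in terms of $f$. Since $\sigma$ is a continuous, orientation-preserving bijection of $\Sph$ (Definition \ref{def:sigma}), so is its iterate $f=\sigma^n$, and the entries of $b_n=(s_0,\ldots,s_{n-1})$ are exactly fixed points of $f$. Writing $i=nj+k$ and $x:=\sigma^k(t)$, one has $\sigma^{i+\ell}(t)=\sigma^\ell\!\big(f^j(x)\big)$, so by continuity of $\sigma^\ell$ the tuple $(\sigma^i(t),\ldots,\sigma^{i+n-1}(t))$ converges to a cyclic permutation of $b_n$ as $j\to\infty$ precisely when $f^j(x)$ converges to one of the points $s_0,\ldots,s_{n-1}$. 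Because $\sigma^k$ is a bijection, fixing $k$ and letting $t$ range over $\Sph$ lets $x$ range over all of $\Sph$. Hence $b_n$ is an attractor in the sense of Conjecture \ref{con:attractor} if and only if for every $x\in\Sph$ the orbit $f^j(x)$ converges to a point of $b_n$. On the other side, every fixed point $q$ of $f$ generates the $n$-cycle $(q,\sigma(q),\ldots,\sigma^{n-1}(q))$, which is a cyclic permutation of $b_n$ exactly when $q\in\{s_0,\ldots,s_{n-1}\}$; so ``there is no other $n$-cycle'' is equivalent to $\mathrm{Fix}(f)=\{s_0,\ldots,s_{n-1}\}$. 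Here I use that $b_n$ is minimal, as it is in Conjecture \ref{con:ninebilliard}.

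For the direction ``attractor $\Rightarrow$ no other cycle'' I would argue by contraposition: any other $n$-cycle supplies a point $q\in\mathrm{Fix}(f)$ with $q\notin b_n$; then $f^j(q)=q$ for all $j$, so this orbit never converges into $b_n$, contradicting the attractor property. For the converse ``no other cycle $\Rightarrow$ attractor'', assume $\mathrm{Fix}(f)=\{s_0,\ldots,s_{n-1}\}$. This finite set cuts $\Sph$ into finitely many open arcs. Since $f$ fixes both endpoints of each arc and preserves orientation, it maps each arc homeomorphically onto itself. Passing to a lift $F$ of $f$ that fixes the lifted endpoints, on the interior of each arc the continuous function $F(x)-x$ never vanishes, hence has constant sign; therefore the orbit $F^j(x)$ is strictly monotone and bounded, and converges to whichever endpoint the sign points toward. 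Both endpoints lie in $b_n$, and the fixed points themselves stay put, so every orbit of $f$ converges to a point of $b_n$. By the reformulation above this is exactly the attractor property, completing the equivalence.

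The main obstacle is the dynamical step in the converse: one must verify carefully that $f$ leaves each complementary arc invariant, which rests on orientation-preservation together with the endpoints being fixed (a homeomorphism sends the arc between two points to the arc between their images, and orientation-preservation selects the same arc), and then that the absence of interior fixed points forces monotone convergence to an endpoint. The remaining bookkeeping---translating tuple convergence in Conjecture \ref{con:attractor} into pointwise orbit convergence of $f$ via the substitution $x=\sigma^k(t)$---is routine, but it must be carried out with care so that the limiting tuple is genuinely a cyclic permutation of $b_n$ rather than merely some fixed point of $f$.
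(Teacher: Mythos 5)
Your proof is correct and follows essentially the same route as the paper's: both reduce the statement to the dynamics of the orientation-preserving map $\sigma^n$ on the arcs between consecutive cycle points, use monotonicity of orbits there, and identify the limit of an orbit that fails to reach an endpoint as a new fixed point, hence a new $n$-cycle. You are somewhat more careful than the paper in translating the tuple-convergence definition of an attractor into pointwise orbit convergence of $\sigma^n$, and in allowing an orbit to drift toward either endpoint of its arc (the paper simply asserts convergence toward the right-hand endpoint $s_l$), but these are refinements of the same argument rather than a different approach.
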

\begin{proof}
Assume that $c_n=(t_0,\sigma(t_0),\ldots,\sigma^{n-1}(t_0))$ is a $n$-cycle different from $b_n$. 
Then $\sigma^{n}(t_0)=t_0$ and $b_n$ cannot be an attractor.

To prove the converse, let $b_n=(s_0,\ldots,s_{n-1})$ be an $n$-cycle and
consider $\sigma^n$ as a continuous, injective and orientation preserving 
map from $[s_k,s_l] \subset \RR$ to itself, where $l$ and $k$ 
are such that $s_l$ and $s_k$ are neighboring, i.e. $s_i \not\in (s_k,s_l)$ for all $i$.
The cycle $b_n$ is an attractor iff for all $x \in (s_k,s_l]$ the sequence
$x_i:=\sigma^{ni}(x)$ converges to $s_l$ as $i \to \infty$.
Since $\sigma$ is orientation preserving we have $x_i \le x_{i+1}$, i.e. the sequence is monotone.
Assume that $b_n$ is not an attractor, i.e. for some $x$ the sequence 
$\{\sigma^{ni}(x)\}_i$ is bounded away from $s_l$, then 
it must converge to some smaller value $c < s_l$. By continuity of $\sigma^n$ it follows that $c$ is a fixed point. 
Therefore $c_n:=(c,\sigma^1(c),\sigma^2(c),\ldots,\sigma^n(c))$ is a closed $n$-cycle different from $b_n$.
\end{proof}

As mentioned above, by concatenation, a minimal cycle gives rise to a series of larger, non-minimal cycles:
For an $n$-cycle $a_n=(x_0,\ldots,x_{n-1})$ we define an $nk$-cycle
\[
  a^k_{n} := (\underbrace{x_0,\ldots,x_{n-1},\ldots,x_0,\ldots,x_{n-1}}_{k \text{ times}}).
\]

\begin{proposition}\label{prop:one_cycle}
Assume $\trefoil$ has a contact function $\sigma$ as in Definition \ref{def:sigma} and 
let $b_n \in (\Sph)^n$ be a minimal $n$-cycle as in Conjecture \ref{con:ninebilliard} and an 
attractor in the sense of Conjecture \ref{con:attractor}. 
Then $b_n$ is the only minimal cycle and all other cycles are multiples of $b_n$.
\end{proposition}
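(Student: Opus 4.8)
The plan is to reduce the whole statement to a single assertion—that every minimal cycle is a cyclic permutation of $b_n$—and then read off the structure of arbitrary cycles. First I would note that any cycle $c=(t_0,\ldots,t_{M-1})$ is generated by its base point via $t_i=\sigma^i(t_0)$ with $\sigma^M(t_0)=t_0$, so its underlying $\sigma$-orbit $O(t_0):=\{\sigma^k(t_0):k\in\ZZ\}$ is finite. Letting $p$ be the minimal period of $t_0$ (the least positive integer with $\sigma^p(t_0)=t_0$), one has $p\mid M$, the points $t_0,\ldots,t_{p-1}$ are pairwise distinct and form a minimal $p$-cycle, and since $t_i=t_{i\bmod p}$ the cycle $c$ is exactly the $(M/p)$-fold concatenation of that minimal cycle. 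Hence the proposition follows once I show every minimal cycle is a cyclic permutation of $b_n$, which forces $p=n$ and exhibits $c$ as a multiple $b_n^{M/n}$.

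For the core step, let $c=(t_0,\ldots,t_{m-1})$ be any minimal $m$-cycle. Since $\sigma$ is a bijection of $\Sph$ by Definition \ref{def:sigma} and $\sigma^m(t_0)=t_0$, the orbit $O(t_0)$ consists of exactly the $m$ distinct points of $c$. I would then apply the attractor property (Conjecture \ref{con:attractor}) to $t=t_0$ with $k=0$: the first coordinate $\sigma^{nj}(t_0)$ converges, as $j\to\infty$, to some point $s_{i_0}$ of $b_n$. The key observation is that this same sequence is periodic in $j$, because $\sigma^{nm}(t_0)=(\sigma^m)^n(t_0)=t_0$; a periodic sequence can converge only if it is constant, so $\sigma^{nj}(t_0)=s_{i_0}$ for all $j$, and taking $j=0$ gives $t_0=s_{i_0}\in b_n$. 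Once $t_0\in b_n$, bijectivity finishes the argument: the orbits of $\sigma$ partition $\Sph$, and $O(t_0)$ and the orbit $b_n$ share the point $t_0$, whence $O(t_0)=b_n$. Therefore $m=n$ and $c$ is a cyclic permutation of $b_n$; combined with the first paragraph this proves both that $b_n$ is the unique minimal cycle and that every cycle is a multiple $b_n^k$.

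I expect the substantive step to be the middle one—converting the attractor hypothesis into the membership $t_0\in b_n$. Lemma \ref{lem:one_attractor} already rules out \emph{other $n$-cycles} of the same length, but says nothing about cycles of a different length $m\neq n$, and that is precisely the new content here. The mechanism that closes the gap is the elementary fact that a convergent periodic sequence is constant, applied to a point that is simultaneously $\sigma$-periodic and subject to the attracting dynamics. The remaining parts—the orbit bookkeeping and the concatenation structure—are routine consequences of $\sigma$ being a bijection and should not cause difficulty.
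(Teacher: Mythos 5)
Your argument is correct, and it reaches the conclusion by a genuinely different route than the paper. The paper reduces an arbitrary $m$-cycle $c_m$ to the common length $l=\mathrm{lcm}(m,n)$ by concatenation: it asserts that $b_n^{i}$ is again an attractor, notes that $b_n^{m/g}$ and $c_m^{n/g}$ are both $l$-cycles, and invokes Lemma \ref{lem:one_attractor} at length $l$ to force $c_m^{n/g}=b_n^{m/g}$, finishing with the minimality of $b_n$. You instead go straight at the base point $t_0$ of an arbitrary minimal $m$-cycle: since $\sigma^{nm}(t_0)=t_0$, the sequence $j\mapsto\sigma^{nj}(t_0)$ is periodic, and the attractor hypothesis makes it convergent, hence constant, which puts $t_0$ on $b_n$; bijectivity of $\sigma$ and minimality then give $m=n$ and identify $c$ as a cyclic permutation of $b_n$, after which the ``every cycle is a power of its underlying minimal cycle'' bookkeeping yields the full statement. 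The underlying mechanism is the same one the paper uses inside Lemma \ref{lem:one_attractor} (a $\sigma^n$-periodic point off $b_n$ obstructs attraction), but you apply it once, directly, for all lengths $m$; this buys you independence from Lemma \ref{lem:one_attractor} and from the unproved auxiliary claim that concatenations $b_n^{i}$ remain attractors, and it makes the terse final step of the paper's proof (``since $b_n$ was minimal, this is only possible if $c_m=b_n^k$'') explicit. The paper's version, in exchange, showcases Lemma \ref{lem:one_attractor} as the organizing principle. One small point worth stating explicitly in your write-up: ``only minimal cycle'' must be read up to cyclic permutation, exactly as the paper intends.
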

\begin{proof}
Let $c_m$ be a $m$-cycle different from $b_n$.
We claim $c_m=b^k_n$ and $m=kn$ for some $k \in \NN$.

If $b_n$ is an attractor, then $b^i_n$ is also an attractor for $i \in \NN$. 
Let $g$ be the greatest common divisor of $n$ and $m$, so $l = mn/g$ is their least common multiple.
Then $b_n^{m/g}$ is an attractor and an $l$-cycle, but $c_m^{n/g}$ is an $l$-cycle as well and Lemma \ref{lem:one_attractor} 
implies $b^{m/g}_n=c^{n/g}_m$. Since $b_n$ was minimal, this is only possible if $c_m=b^k_n$ for some $k\in\NN$.
\end{proof}

Lemma \ref{lem:one_attractor} and Proposition \ref{prop:one_cycle} fit well with our numerical observation. We find only one possible cycle and it seems to
be an attractor.

\section{Conclusion}

We have presented numerical and esthetical compelling evidence for the existence of 
a closed nine-cycle in the contact chords of the
ideal trefoil knot. Enforcing symmetry based on a Fourier representation
turned out to be essential to observe this feature. The cycle leads to a
partitioning of the trefoil. Only two segments of the curve have to
be considered, the remaining parts of the trefoil can be reconstructed
by symmetry. For other contact chord paths, after enough iterations,
it seems that they eventually converge to the nine-cycle. So the
closed cycle acts as an attractor for all other billiards.

Preliminary numeric experiments by E. Starostin suggest closed cycles 
in ideal shapes of knots with a higher number of crossings as well. 
The interesting cases remain however inconclusive, 
since these knot shapes are believed to be much less ideal than the trefoil. 
Closed cycles in these knots
might then also suggest a natural partitioning of the curves, therefore
improving the understanding of these knots.

In the $\Sph^3$ setting \cite{Ge10} suggested a candidate trefoil for
ideality to the problem of maximizing thickness.
Each point on the $\Sph^3$ trefoil is in contact with two other
points on the curve. Following the contact great-arcs (in $\Sph^3$) 
five times forms a circle, i.e. a 5-cycle.

The numerical computations suggest at least two new challenges: First, can we get new insights about the ideal trefoil assuming the existence of a nine-cycle? 
And second, can we prove, under some reasonable hypothesis, that the 
ideal trefoil or even every ideal knot has closed contact cycles?

\section{Acknowledgements} 
Research supported by the Swiss National Science Foundation SNSF No. 117898
and SNSF No. 116740.
We would like to thank  E. Starostin and J.H. Maddocks for
interesting discussions and helpful comments.

\end{document}